\documentclass[12pt]{amsart}

\usepackage{latexsym}
\usepackage{enumerate}
\usepackage{amsmath, amssymb, amsfonts, amsxtra, amsthm}

\usepackage[normalem]{ulem}

\usepackage{amssymb}
%\usepackage{mathtools}
%\mathtoolsset{showonlyrefs,showmanualtags} 

\setlength{\textwidth}{16.6cm}
\setlength{\oddsidemargin}{0cm}
\setlength{\evensidemargin}{0cm}
\allowdisplaybreaks
%\swapnumbers

\numberwithin{equation}{section}

\newcommand {\rea}{\mathbb{R}}

\def\P{{\bf P}}
\def\Q{{\bf Q}}
\def\T{{\bf T}}
\def\<{\left<}
\def\>{\right>}
\newcommand {\size}{\mathrm{size}}
\newcommand {\sgn}{\mathop{\mathrm{sgn}}}
\newcommand {\ov}{\overline}
\newcommand {\calI}{\mathcal{I}}
\newcommand {\calJ}{\mathcal{J}}

\newcommand {\tphi}{\tilde{\phi}}

\newtheorem{theorem}{Theorem}[section]

\newtheorem{proposition}[theorem]{Proposition}
\newtheorem{claim}[theorem]{Claim}
\newtheorem{lemma}[theorem]{Lemma}

%\newtheoremstyle{noparen}{3pt}{3pt}{\itshape}{}{\bfseries}{.}{ }{\thmnote{#3}}
%\theoremstyle{noparen}
%\newtheorem*{vartheorem}{}

\title[Variational bounds for the quartile operator]{Variational bounds for a dyadic model of the bilinear Hilbert transform}

\author[Y. Do, \ \ R. Oberlin, \ \  E. A. Palsson]
{Yen Do \ \ \ Richard Oberlin \ \ \ Eyvindur Ari Palsson}
\address{Department of Mathematics,
Yale University, New Haven, CT 06511, USA}
\email{yen.do@yale.edu}

\thanks{The second author is supported in part by NSF Grant DMS-1068523.}
\address{Department of Mathematics,
Louisiana State University,
Baton Rouge, LA 70803-4918, USA}
\email{oberlin@math.lsu.edu}

\address{Department of Mathematics, University of Rochester, Rochester, NY 14627-0251, USA}
\email{palsson@math.rochester.edu}

\begin{document}
\begin{abstract}{We prove variation-norm estimates for the Walsh model of the truncated bilinear Hilbert transform, extending related results of Lacey, Thiele, and Demeter. The proof uses analysis on the Walsh phase plane and two new ingredients: (i) a variational extension of a lemma of Bourgain by Nazarov--Oberlin--Thiele, and (ii) a variation-norm Rademacher--Menshov theorem of Lewko--Lewko.}
\end{abstract}

\maketitle

\section{Introduction}
In this paper we consider a variation-norm analog of the following maximal operator
\[
H^*[f_1,f_2](x) = \sup_k |\sum_{|I_P| \geq 2^k} |I_P|^{-1/2}\<f_1,\phi_{P_1}\>\<f_2,\phi_{P_2}\>\phi_{P_3}(x)|
\]
where we sum over $P$ in a collection $\ov{\P}$ of dyadic rectangles in $\mathbb R^+ \times \mathbb R^+$ of area four (also known as quartiles) and $\phi_{P_1},\phi_{P_2},\phi_{P_3}$ denote dyadic wave packets adapted to appropriate subsets of $P$, see Section \ref{termsection} for details. Note that we have suppressed the notational dependency on $\ov{\P}$ for simplicity (and all implicit constants in this paper shall be independent of the underlying collection of quartiles). The non-maximal variant of $H^*$ is known as the quartile operator and was introduced in \cite{thiele95tfa} as a discrete model of the bilinear Hilbert transform. The operator $H^*$ serves as a dyadic model for both the maximal bilinear Hilbert transform and the bilinear maximal function \cite{lacey00bmf} (cf. \cite{thiele01mqo, demeter07pce}). See also the discussion after \eqref{e.variationBHT}.

Our aim here is to bound the operator formed by replacing the $\ell^{\infty}$ norm in the definition of $H^*$ by a stronger variation-semi-norm. Given an exponent $r \geq 1$ write
\[
\|g\|_{V^r_k} = \sup_{N, k_0 < \cdots < k_N} (\sum_{j = 1}^N|g(k_j) - g(k_{j-1})|^r)^{1/r}
\]
where the supremum is over all strictly increasing finite-length sequences of integers. Setting
\[
H^r[f_1,f_2](x) = \|\sum_{|I_P| \geq 2^k} |I_P|^{-1/2}\<f_1,\phi_{P_1}\>\<f_2,\phi_{P_2}\>\phi_{P_3}(x)\|_{V^r_k},
\]
we will prove
\begin{theorem} \label{maintheorem} Suppose $r > 2$, and $p_1, p_2, q$ satisfy
\[
\frac{1}{q} = \frac{1}{p_1} + \frac{1}{p_2}, \ \ \ \ \frac{2}{3} < q < \infty, \ \ \ \ 1 < p_1, p_2 \le \infty,
\] 
then for some constant $C=C(p_1,p_2,r)<\infty$ we have
\begin{equation} \label{maintheoremeq}
\|H^r[f_1,f_2]\|_{L^q} \leq C \|f_1\|_{p_1} \|f_2\|_{p_2}.
\end{equation}
\end{theorem}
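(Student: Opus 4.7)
The plan is to adapt the Walsh phase-plane (tree / size / density) machinery of Lacey--Thiele and Demeter, inserting the two new variational tools advertised in the abstract at the appropriate points. First, I would dualize by pairing $H^r[f_1,f_2]$ against a test function $f_3 \in L^{q'}$ and linearize the $V^r$-norm by fixing a measurable sequence of stopping times $\{k_j(x)\}$ together with $\ell^{r'}$-valued coefficients $\{c_j(x)\}$; by the standard restricted-weak-type and real-interpolation formalism, it then suffices to bound uniformly the resulting trilinear form when each $f_i$ is the characteristic function of a measurable set $E_i$.

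Next, write $S_k[f_1,f_2] = \sum_{j \ge k} D_j[f_1,f_2]$, where $D_j$ denotes the single-scale contribution from quartiles with $|I_P| = 2^j$. The Lewko--Lewko variation-norm Rademacher--Menshov theorem (applicable because $r > 2$) dominates $\|S_k(x)\|_{V^r_k}$ by a Littlewood--Paley-type square function assembled from dyadic-block sums of the $D_j$, with only a summable logarithmic loss. This reduces matters to an $L^q(\ell^2)$ vector-valued estimate for a family of finite-scale quartile sums -- essentially a non-variational problem of the Lacey--Thiele type, but with a square-function envelope in place of a scalar sum.

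For each quartile sum I would then apply the Lacey--Thiele tree decomposition and estimate a single tree. The Nazarov--Oberlin--Thiele variational refinement of Bourgain's lemma enters precisely here: on a single tree, the Walsh wave packets $\phi_{P_3}$ have nested or disjoint frequency intervals across scales, and NOT's lemma converts this orthogonality into a $V^r$-bound on partial sums of single-scale contributions within the tree, matching the scalar tree estimate up to acceptable constants. Combining this with the usual John--Nirenberg / Bessel-type counting of trees of size $2^{-m}$ and density $2^{-n}$, and real-interpolating in $(p_1, p_2, q)$, yields \eqref{maintheoremeq} throughout the claimed range. The main obstacle will be the variational single-tree estimate itself: the $V^r$-gain from NOT's lemma must be strong enough to replace the $\ell^\infty$-gain used in the classical proof while remaining compatible with the $\ell^2$-envelope produced by Lewko--Lewko; once this local bound is correctly calibrated, the remainder of the argument is a cosmetic modification of the Lacey--Thiele--Demeter proof.
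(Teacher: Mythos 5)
There is a genuine gap, and it comes from where you place the two new lemmas. Your second step --- dominating $\|S_k(x)\|_{V^r_k}$ for the full bilinear operator by a square function of dyadic block sums via the Lewko--Lewko theorem, so that the problem becomes ``essentially non-variational'' --- does not work as stated. Lemma \ref{vnrmlemma} is an $L^2$ statement about \emph{orthogonal} functions with a loss of $\log N$ in the \emph{number} of summands: the single-scale pieces $D_j[f_1,f_2]$ of the bilinear operator are not orthogonal, the number of scales is unbounded (so the logarithmic loss is uncontrolled), and the target space is $L^q$ with $q$ possibly below $1$, where no such $L^2$ orthogonality argument applies. In the paper this lemma is used only deep inside an $L^2$ Bessel-type estimate (inequality \eqref{unlinearized} in Proposition \ref{mainpropositionweak}), where the summands are sums of wave packets with pairwise disjoint tiles $P_3$ (hence genuinely orthogonal) and the number of groups is bounded by $2^{l+1}\lambda$ through the $L^\infty$ bound on the counting function, so the $\log$ loss is absorbed into $(2^l\lambda)^\epsilon$. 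Likewise, the Nazarov--Oberlin--Thiele lemma (Lemma \ref{vblemma}) is not a single-tree tool: on a single tree there is one top frequency and the variational truncation identity \eqref{vartrunc} reduces the tree variation to L\'epingle's Lemma \ref{leplemma}; the NOT lemma is a \emph{multi-frequency} projection estimate, with loss $|\Xi|^\epsilon$ in the number of distinct top frequencies, and it is needed for the ``short jumps'' in the decomposition \eqref{longshortbound} across many trees.

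More fundamentally, your proposal treats the ``usual John--Nirenberg / Bessel-type counting of trees'' as routine, but that is exactly the step that breaks in the variational setting and is the heart of the paper. After linearizing the $V^r$ norm one must work with $\tphi_{P_3}(x)=a_P(x)\phi_{P_3}(x)$, whose coefficients depend on $x$ through the stopping times; this destroys the orthogonality of the dual wave packets, so the classical tree-count for the third (dualizing) function fails. The paper replaces it by Propositions \ref{mainpropositionstrong} and \ref{mainpropositionweak}: a good-$\lambda$ argument for the counting function $N=\sum_T 1_{I_T}$, reduced to a weak-type bound whose proof requires the variation-norm $L^2$ estimate \eqref{unlinearized}, established via the long-jump/short-jump splitting \eqref{longshortbound}, Lemma \ref{vnrmlemma} for the long jumps, and Lemma \ref{vblemma} for the short jumps. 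Without an argument of this kind your tree-selection step has no Bessel inequality to back it, so the outline as written does not close; to repair it you would need to confront the non-orthogonality of the linearized $\tphi_{P_3}$ directly, essentially reproducing the counting-function analysis of Section \ref{sisection}.
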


We became interested in bounds for $H^r$ while studying the following bilinear operator
\begin{equation}\label{e.variationBHT}
B^r[f_1,f_2](x) = \|\frac{1}{2t}\int_{-t}^tf_1(x + y)f_2(x-y)\ dy \|_{V^r_t}.
\end{equation}
The simpler maximal variant of $B^r$ is the bilinear maximal function studied in \cite{lacey00bmf}. An oscillation-norm variant\footnote{More precisely, \cite{demeter07pce} proves  estimates on a finite sum of oscillations, however the implicit constant depends on the number of oscillations being measured.} of $B^r$ was also considered in \cite{demeter07pce}. Bounds for the simpler linear version of $B^r$, i.e. the variation-norm analog of the centered Hardy-Littlewood maximal function, proved in \cite{bourgain89pes}, can be used to strengthen the Birkhoff ergodic theorem on the pointwise convergence of linear ergodic averages. Similarly, bounds on the more delicate $B^r$ and its oscillation-norm variants are useful for studies of pointwise convergence of bilinear ergodic averages, see e.g. \cite{demeter07pce}. While the oscillation-norm estimates in \cite{demeter07pce} are just enough for this purpose, bounds on $B^r$ give more quantitative information about the related rate of convergence.  In this paper, only the dyadic variant $H^r$ of $B^r$ will be considered, which is technically simpler than the continuous setting and therefore allows for a relatively clear and accessible illustration of the main ideas, which we expect to be useful in forthcoming study of $B^r$.   

\subsection{Structure of the paper} We essentially follow the framework of \cite{lacey00bmf}, \cite{thiele01mqo}, although the argument is slightly reorganized and simplified to reflect the modern language of time-frequency analysis. The main new ingredients in the proof are variational extensions of several maximal theorems, including a variation-norm extension of the Rademacher--Menshov theorem obtained in \cite{lewko12esv} and an extension of a lemma of Bourgain \cite{bourgain89pes} to the variation-norm setting obtained in \cite{nazarov10czd} (cf. \cite{oberlin11awm}). These auxiliary results and other background materials are summarized in Sections \ref{termsection} and \ref{auxsection}. Several technical lemmas are proven in Sections \ref{sbsection} and \ref{sisection}, and we show how they imply Theorem~\ref{maintheorem} in Section \ref{poftsection}. 

%The parts of the argument which are sensitive to the distinction between $H^*$ and $H^r$ are Propositions \ref{propositionvnsb} and \ref{mainpropositionweak}. 

\subsection{Notational conventions}
We use $|\cdot|$ to denote Lebesgue measure, cardinality, or an understood norm depending on context. The indicator function of a set $E$ is written $1_E.$ Dyadic intervals are half-open on the right, i.e. of the form $[n2^k, (n+1)2^k)$ for integers $n,k.$

\subsection*{Acknowledgement} This work was initiated while the authors were visiting the University of California, Los Angeles in Winter 2012, and the visit was supported in part by the AMS Math Research Communities program. The authors would like to thank the MRC and Christoph Thiele for their generous support, hospitality, and useful conversations.

\section{Terminology} \label{termsection}

In this paper, a \emph{tile} is a dyadic rectangle in $\rea^+ \times \rea^+$ of area one. A \emph{quartile} is defined analogously, except with area four instead of one. Each quartile $P=I_P \times \omega_P$ can be written as the disjoint union of four tiles $P_1, P_2, P_3, P_4$ where $P_i = I_{P_i} \times \omega_{P_i},$ $I_{P_i} = I_P$, and $\omega_{P_i}$ is the i'th dyadic grandchild of $\omega_P$, in increasing order from left to right.

The Walsh wave-packet $\phi_p$ associated to a tile $p$ can be defined as follows. First, if $p = I \times [0,2^{n})$ then $\phi_{p}(x) = 2^{-n/2}1_{I}(x).$ To extend the definition to all tiles, we use the following recursive formulas where the subscripts $l$ and $r$ denote the left and right halves of a dyadic interval:
\begin{equation} \label{recursive}
\phi_{I \times \omega_r} = \frac{1}{\sqrt{2}}(\phi_{I_l \times \omega} - \phi_{I_r \times \omega}), \ \ \phi_{I \times \omega_l} = \frac{1}{\sqrt{2}}(\phi_{I_l \times \omega} + \phi_{I_r \times \omega}).
\end{equation}
It is not hard to see that $\phi_p$ and $\phi_{p'}$ are orthogonal if $p \cap p' = \emptyset$.

Given a collection of quartiles $T$, a ``top frequency'' $\xi_T \in \rea^+$, and a dyadic ``top interval'' $I_T \subset \rea^+$, we say that $(T,\xi_T, I_T)$ form a \emph{tree} if for every $P \in T$, $I_P \subset I_T$ and $\xi_T \in \omega_P$. Letting $\omega_T$ be the dyadic interval of length $|I_T|^{-1}$ containing $\xi_T$, we write $p_T$ for the tile $I_T \times \omega_T.$

For $i=1, \ldots, 4$ a tree is said to be \emph{$i$-overlapping} if for every $P \in T$, $\xi_T \in \omega_{P_i}.$ We will say that a tree is \emph{$i$-lacunary} if it is $j$-overlapping for some $j \neq i.$ One can check that if $T$ is $i$-lacunary then the tiles $\{P_i\}_{P \in T}$ are pairwise disjoint.

To define a notion of size that is compatible with $H^r$, we will need to linearize and dualize the variation-norm. For each $x$ consider an increasing integer-valued sequence $\{k_j(x)\}_{j = -\infty}^{\infty}$, and a sequence $\{a_j(x)\}_{j = -\infty}^{\infty}$ such that $\sum_{j = -\infty}^{\infty}|a_j(x)|^{r'} \leq 1$. Then an appropriate choice of such sequences guarantees that, for every $x$,
\begin{equation}\label{modelop}
\sum_{P \in \ov{\P}} |I_P|^{-1/2}\<f_1,\phi_{P_1}\>\<f_2,\phi_{P_2}\>\phi_{P_3}(x)a_P(x) \geq \frac{1}{2} H^r[f_1,f_2](x) 
\end{equation}
where $a_P(x) := a_m(x)$ if $m=m(P,x)$ is the (clearly unique) integer satisfying $2^{k_{m-1}(x)} \leq |I_P| < 2^{k_{m}(x)}$, and $0$ if such $m$ does not exist. 

Thus, to prove \eqref{maintheoremeq}, it suffices to give a corresponding bound for the left side above which is independent of the choice of sequences. Fixing these sequences once and for all, we write
\[
\tphi_{P_3}(x) = a_P(x) \phi_{P_3}(x), 
\]
and when $i\neq3$ write $\tphi_{P_i}(x) = \phi_{P_i}(x).$\footnote{At first glance this notation might seem slightly abusive, but since each tile is contained in a unique quartile, $\tilde{\phi}_p$ is defined implicitly for any tile $p$.}
For collections of quartiles $\P$ and functions $f$ on $\rea^+$ we define
\begin{equation}\label{sizedef}
\size_i(\P,f) = \sup_{T \subset \P} (\frac{1}{|I_T|}\sum_{P \in T}|\<f,\tphi_{P_i}\>|^2)^{1/2}
\end{equation}
where the supremum is over all $i$-lacunary trees contained in $\P.$

Let $A_k$ denote the dyadic averaging operator
\[
A_k[f](x) = \frac{1}{|I|} \int_{I} f(y)\ dy
\]
where $I$ is the unique dyadic interval of length $2^k$ containing $x$. Sums of wave packets in lacunary trees can be truncated using $A_k$ as follows:
\begin{claim} \label{averageclaim}
Suppose that $T$ is a $i'$-overlapping tree. Then for $i \neq i'$ there is $\nu\in \{0,1\}$ such that for any coefficients $\{c_P\}_{P \in T}$
\begin{equation} \label{trunctree}
\sum_{P \in T: \, \, |I_P| > 2^{k + \nu} } c_P \phi_{P_i} = \sgn(\phi_{p_T}) A_{k}[\sgn(\phi_{p_T} ) \sum_{P \in T} c_P \phi_{P_i}].
\end{equation}
Moreover, $\nu = 0$ when $\{i,i'\}=\{1,2\}$ or $\{3,4\}$, and $\nu = 1$ otherwise.
\end{claim}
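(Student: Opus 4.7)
My plan is to reduce the identity to a statement about single Walsh wave packets, exploiting the multiplicative structure of $\{\phi_p\}$. The core observation is that for each $P\in T$ the product $\sgn(\phi_{p_T}(x))\,\phi_{P_i}(x)$ collapses to $\pm\phi_{I_P\times\tilde\omega_P}$ for a single tile $I_P\times\tilde\omega_P$; once this is in hand, \eqref{trunctree} becomes a question of when $A_k$ preserves or annihilates such a wave packet, which is controlled entirely by the offset of $\tilde\omega_P$ in $\rea^+$.

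First, I would record two elementary Walsh identities established via \eqref{recursive}: (a) $\sgn(\phi_p)=|I_p|^{1/2}\phi_p$ for any tile $p$, immediate from the fact that $\phi_p$ takes only the values $\pm|I_p|^{-1/2}$ on $I_p$; and (b) for tiles $p,p'$ with the same time interval $I$, $\phi_p\phi_{p'}=|I|^{-1/2}\phi_{I\times(\omega_p\oplus\omega_{p'})}$, where $\oplus$ denotes the natural Walsh XOR on dyadic intervals of equal length (proved by induction on the frequency scale using \eqref{recursive}). Next, because $T$ is $i'$-overlapping, the dyadic ancestor of $\omega_T$ at scale $|I_P|^{-1}$ must contain $\xi_T$ and hence equals $\omega_{P_{i'}}$; iterating the inverse (time-halving) form of \eqref{recursive} $\log_2(|I_T|/|I_P|)$ times then yields
\[
\phi_{p_T}|_{I_P} = \epsilon_P\sqrt{|I_P|/|I_T|}\,\phi_{I_P\times\omega_{P_{i'}}}
\]
for some $\epsilon_P\in\{\pm1\}$. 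Combining (a), (b), and this identity gives, on all of $\rea^+$,
\[
\sgn(\phi_{p_T})(x)\,\phi_{P_i}(x) = \epsilon_P\,\phi_{I_P\times\tilde\omega_P}(x),\qquad \tilde\omega_P:=\omega_{P_{i'}}\oplus\omega_{P_i},
\]
since both sides vanish off $I_P$.

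Because $\omega_{P_{i'}}$ and $\omega_{P_i}$ are distinct grandchildren of $\omega_P$, their frequency labels differ only in the last two bits, so $\tilde\omega_P=[\tilde N|I_P|^{-1},(\tilde N+1)|I_P|^{-1})$ with $\tilde N\in\{1,2,3\}$; a direct case check shows $\tilde N=1$ precisely when $\{i,i'\}\in\{\{1,2\},\{3,4\}\}$, and $\tilde N\in\{2,3\}$ otherwise. Since $\phi_{I_P\times\tilde\omega_P}$ is a rescaled Walsh function on $I_P$, its average on a dyadic interval $J$ of length $2^k$ with $J\subset I_P$ is nonzero iff $\tilde\omega_P\subset[0,|J|^{-1})$, i.e.\ iff $\tilde N<|I_P|/2^k$, which is exactly the condition $|I_P|>2^{k+\nu}$ with $\nu$ as stated; when the criterion is satisfied, $A_k[\phi_{I_P\times\tilde\omega_P}]=\phi_{I_P\times\tilde\omega_P}$, and otherwise the relevant integral vanishes since $\int_{I_P}\phi_{I_P\times\tilde\omega_P}=0$. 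By linearity,
\[
A_k\Big[\sgn(\phi_{p_T})\sum_{P\in T} c_P\phi_{P_i}\Big] = \sum_{P\in T,\,|I_P|>2^{k+\nu}} c_P\epsilon_P\,\phi_{I_P\times\tilde\omega_P},
\]
and multiplying by $\sgn(\phi_{p_T})$ once more --- using $\sgn(\phi_{p_T})^2=1_{I_T}$ and the fact that the surviving terms are supported in $I_T$ --- reverses the identification to produce \eqref{trunctree}. The main obstacle is the identification step yielding the displayed formula for $\phi_{p_T}|_{I_P}$: although this is only careful bookkeeping through the recursion, one must verify that the iterative halving indeed lands on $\omega_{P_{i'}}$ at scale $|I_P|^{-1}$, which is precisely where the $i'$-overlapping hypothesis is invoked.
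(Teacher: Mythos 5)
Your proposal is correct and is essentially the paper's argument: the paper likewise identifies $\sgn(\phi_{p_T})\phi_{P_i}$ on $I_P$ as a sign times a mean-zero Walsh packet of frequency index $\tilde N\in\{1,2,3\}$ (written there explicitly as the Haar-type combinations $\pm|I_P|^{-1/2}(1_{(I_P)_l}-1_{(I_P)_r})$ for $\nu=0$ and the corresponding child-scale combination for $\nu=1$) and then concludes by inspecting when $A_k$ reproduces or annihilates such a function. Your use of the restriction identity for $\phi_{p_T}|_{I_P}$ and the Walsh XOR product rule simply makes explicit the computation the paper compresses into ``one can check using \eqref{recursive}.''
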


\begin{proof}
If $\nu = 0$ one can check using \eqref{recursive} that 
\[
\sgn(\phi_{p_T})\phi_{P_i} = \pm|I_P|^{-1/2}(1_{(I_{P})_l} - 1_{(I_{P})_r}) 
\]
where choice of sign $\pm$ is uniform over $x$. If $\nu = 1$ then similarly
\[
\sgn(\phi_{p_T}) \phi_{P_i} = \pm|I_P|^{-1/2}\left((1_{((I_{P})_l)_l} - 1_{((I_{P})_l)_r}) \pm(1_{((I_{P})_r)_l} - 1_{((I_{P})_r)_r})\right).
\] 
The claim then follows by inspection of averages.
\end{proof}

A consequence of \eqref{trunctree} is that for each $x$
\begin{equation} \label{vartrunc}
\|\sum_{P \in T: \, \, |I_P| \geq 2^{k} } c_P \phi_{P_i}(x)\|_{V^r_k} = \|A_k[\sgn(\phi_{p_T} ) \sum_{P \in T} c_P \phi_{P_i}](x)\|_{V^r_k}.
\end{equation}

\section{Auxiliary estimates} \label{auxsection}

We will start by recalling three variation-norm bounds which will be of later use. The first is a special case of a theorem of L\'{e}pingle \cite{lepingle76lvd}. 
\begin{lemma} \label{leplemma}
Suppose $r > 2$ and $1 < t < \infty$. Then
\[
\|A_k[f](x)\|_{L^t_x(V^r_k)} \leq C_{r,t} \|f\|_{L^t}.
\]
\end{lemma}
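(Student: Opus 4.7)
The plan is to exploit the fact that for each $x$ the sequence $\{A_k f(x)\}_k$ is, as $k$ decreases, the dyadic martingale obtained by conditioning $f$ on the nested $\sigma$-algebras $\mathcal{F}_k$ generated by dyadic intervals of length $2^k$. The two standard ingredients I would use are Doob's maximal inequality $\|\sup_k |A_k f|\|_{L^t} \lesssim_t \|f\|_{L^t}$ and the Burkholder--Gundy square-function bound
\[
\Bigl\|\Bigl(\sum_k |A_{k+1} f - A_k f|^2\Bigr)^{1/2}\Bigr\|_{L^t} \lesssim_t \|f\|_{L^t}, \qquad 1 < t < \infty,
\]
both classical for dyadic martingales.

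The first substantive step is to establish a $\lambda$-jump inequality. For $\lambda > 0$ let $N_\lambda(x)$ be the supremum over strictly increasing $k_0 < \cdots < k_N$ of the number of indices $j$ with $|A_{k_j} f(x) - A_{k_{j-1}}f(x)| \geq \lambda$. The aim is
\[
\bigl\|\lambda \, N_\lambda^{1/2}\bigr\|_{L^t_x} \lesssim_t \|f\|_{L^t}.
\]
I would prove this by the usual stopping-time argument: a $\lambda$-jump configuration yields pairs of consecutive stopping times between which the martingale has fluctuated by at least $\lambda$; summing the squares of the corresponding stopped increments gives a pointwise lower bound $\gtrsim \lambda^2 N_\lambda(x)$ for the martingale square function, and Burkholder--Gundy then converts this into the desired $L^t$ bound.

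The second step is to pass from the jump inequality to the $V^r$ bound. Binning each jump by the dyadic scale nearest to its size gives the pointwise estimate
\[
\|A_k f(x)\|_{V^r_k}^r \lesssim_r \sum_{m \in \BBZ} 2^{rm} N_{2^m}(x).
\]
Taking $L^{t/r}$ norms of both sides, inserting the jump inequality for each summand, and using Doob's maximal estimate to cut off the sum near both extremes of $m$ (large $\lambda$: $N_\lambda$ is controlled by the maximal function and hence vanishes in most of space; small $\lambda$: telescoping against the total oscillation), the sum in $m$ reduces to a geometric series — the hypothesis $r > 2$ is exactly what makes this series converge. For the range $t \leq r$ a minor extrapolation or weak-type refinement of the jump inequality is required.

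The main obstacle is the jump inequality, but in the dyadic-martingale setting it is classical and is essentially the content of \cite{lepingle76lvd}; a transparent stopping-time proof appears also in the context of \cite{bourgain89pes}. The criticality of $r > 2$ is concentrated in the final geometric summation and matches the known failure of the estimate at the endpoint $r = 2$.
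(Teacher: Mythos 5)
The paper itself gives no proof of this lemma: it is quoted as a special case of L\'epingle's theorem \cite{lepingle76lvd}. So your proposal is an attempt to reprove that cited classical result, and your overall strategy (view $\{A_kf\}_k$ as a dyadic martingale, prove a uniform $\lambda$-jump inequality by a stopping-time/sampling argument plus a square-function bound, then upgrade to $V^r$ for $r>2$) is indeed the standard route, cf.\ \cite{bourgain89pes,nazarov10czd}. One repairable imprecision in your first step: $\lambda^2N_\lambda(x)$ is \emph{not} pointwise dominated by the square function of the original martingale (the martingale can drift by $\lambda$ through $K$ increments of size $\lambda/K$, making $\sum_k|A_{k+1}f-A_kf|^2\sim\lambda^2/K$ while $N_\lambda\ge1$); the correct statement is that $\lambda^2N_\lambda$ is dominated by the square function of the \emph{sampled} martingale $(A_{\tau_j}f)_j$, and one applies the square-function inequality to that sampled martingale (optional sampling), e.g.\ via $\|\sup_j|A_{\tau_j}f|\|_{L^t}\le\|\sup_k|A_kf|\|_{L^t}\lesssim_t\|f\|_{L^t}$. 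With that adjustment the jump inequality is fine.

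The genuine gap is in the passage from jumps to $V^r$. Taking $L^{t/r}$ norms of the pointwise bound $\|A_kf(x)\|_{V^r_k}^r\lesssim\sum_m 2^{rm}N_{2^m}(x)$ and ``inserting the jump inequality for each summand'' does not yield a convergent geometric series: Minkowski gives $\sum_m 2^{rm}\|N_{2^m}\|_{L^{t/r}}$, while the jump inequality controls $\|N_\lambda\|_{L^{p/2}}\lesssim\lambda^{-2}\|f\|_{L^p}^2$, so matching exponents forces $p=2t/r$ and produces $\|f\|_{L^{2t/r}}^2\sum_m 2^{(r-2)m}$, which diverges as $m\to+\infty$ and in any case carries the wrong norm of $f$. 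The support restriction $\{N_{2^m}\neq0\}\subset\{\sup_k|A_kf|\ge2^{m-1}\}$ cannot be exploited after Minkowski; one has to argue distributionally: bound $|\{\|A_kf\|_{V^r_k}>\alpha\}|$ by $\sum_m|\{2^{rm}N_{2^m}>c_m\alpha^r\}|$ with geometrically decaying weights $c_m$ centered at $2^m\sim\alpha$, use Chebyshev with the $L^t$ jump inequality for the small scales $2^m\lesssim\alpha$ and the maximal function for $2^m\gtrsim\alpha$ (this is where $r>2$ enters), obtaining weak type $(t,t)$, and then recover strong type by Marcinkiewicz interpolation in $t$, which is available since $t$ ranges over the open interval $(1,\infty)$. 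So the ``weak-type refinement'' that you defer to the range $t\le r$ is in fact the heart of the second step for every $t$; alternatively, quote a known jump-to-variation transference lemma (Jones--Seeger--Wright, or the real-interpolation approach of Pisier--Xu), at which point your outline becomes the standard proof of the quoted theorem.
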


Now, let $\Xi$ be any finite subset of $\rea^+$, and for each integer $k$ let $\Omega_k$ be the set of dyadic intervals of length $2^{-k}$ which intersect $\Xi.$ Let
\begin{equation}\label{mfprojection}
\Delta_k[f] = \sum_{\omega \in \Omega_k} \sum_{I : \,\, |I| = 2^k} \<f,\phi_{I \times \omega}\>\phi_{I \times \omega}.
\end{equation}
Note that while the definition of $\Delta_k[f]$ involves an infinite sum, for each $x$ only finitely many terms are nonzero. Equivalently,
\[
\Delta_k[f] = \sum_{\omega \in \Omega_k} (1_{\omega} \hat{f})\check{\ }.
\]
The following Lemma follows from \cite[Lemma 9.2]{oberlin11awm}, see also \cite{nazarov10czd}, which is a variation-norm extension of a lemma of Bourgain \cite{bourgain89pes}.
\begin{lemma} \label{vblemma}
Suppose $r > 2$ and $\epsilon > 0.$ Then
\[
\|\Delta_k[f](x)\|_{L^2_x(V^r_k)} \leq C_{r,\epsilon} |\Xi|^{\epsilon} \|f\|_{L^2}.
\]
\end{lemma}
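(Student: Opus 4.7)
The plan is to combine a variation-norm Rademacher--Menshov type inequality (the Lewko--Lewko result referenced in the introduction) with the $L^2$ orthogonality of Walsh wave packets arising from disjointness of tiles.

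For fixed $x$, the sequence $k \mapsto \Delta_k[f](x)$ is piecewise constant and only changes at scales where the dyadic partition $\Omega_k$ refines. Since $\Omega_k$ is determined by the finite set $\Xi$, there are at most $N = O(|\Xi|)$ such scales, so after reindexing the variation is measured over $\{0, 1, \dots, N\}$. The variation-norm Rademacher--Menshov inequality for $r > 2$ gives
\[
\|g\|_{V^r}^2 \leq C_r \sum_{j=0}^{\log_2 N} \sum_{I \in \mathcal{D}_j} |g(r(I)) - g(l(I))|^2,
\]
where $\mathcal{D}_j$ partitions $\{0, \dots, N\}$ into consecutive dyadic blocks of length $2^j$ with endpoints $l(I) < r(I)$. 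Applying this with $g(k) = \Delta_k[f](x)$ and integrating over $x$, it suffices to prove that for each $j$,
\[
\sum_{I \in \mathcal{D}_j} \|\Delta_{r(I)}[f] - \Delta_{l(I)}[f]\|_{L^2}^2 \lesssim \|f\|_{L^2}^2.
\]

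By inspection, $\Delta_k$ is the $L^2$-orthogonal projection onto the frequency set $U_k := \bigcup_{\omega \in \Omega_k} \omega$, and $U_{k_1} \supseteq U_{k_2}$ for $k_1 \leq k_2$. Hence for $I = [a, b)$, the difference $\Delta_a[f] - \Delta_b[f]$ is the projection onto $U_a \setminus U_b$. Across any fixed scale $j$, the frequency annuli $\{U_{l(I)} \setminus U_{r(I)}\}_{I \in \mathcal{D}_j}$ are pairwise disjoint: if $I$ lies to the left of $I'$ then $r(I) \leq l(I')$, so $U_{r(I)} \supseteq U_{l(I')}$ and the two annuli miss each other. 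Orthogonality then bounds the sum of squared $L^2$-norms by $\|f\|_{L^2}^2$, and summing over $1 + \log_2 N$ scales $j$ yields a $\log N$ loss, absorbed into $|\Xi|^\epsilon$.

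The main obstacle is establishing the variation-norm Rademacher--Menshov inequality with the correct dependence on $r$, which is precisely the content of \cite[Lemma 9.2]{oberlin11awm} building on \cite{nazarov10czd} and \cite{bourgain89pes}. Once that inequality is taken for granted, the orthogonality argument above is elementary and the lemma follows.
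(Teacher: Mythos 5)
The central step in your argument is false: for fixed $x$ the sequence $k \mapsto \Delta_k[f](x)$ is \emph{not} piecewise constant with only $O(|\Xi|)$ changes. What changes at most $|\Xi|$ times is the cardinality $|\Omega_k|$; the collection $\Omega_k$ itself changes at every scale, because each interval is replaced by whichever of its halves meet $\Xi$, so the frequency support $U_k=\bigcup_{\omega\in\Omega_k}\omega$ keeps shrinking as $k$ increases even while $|\Omega_k|$ stays constant, and $\Delta_k[f](x)$ genuinely varies at all of these scales. Already for $|\Xi|=1$, say $\Xi=\{\xi\}$ with $\xi\in[0,1)$, one checks from \eqref{mfprojection} that $\Delta_k=A_k$ for every $k\le 0$, so the asserted bound contains L\'epingle's inequality (Lemma \ref{leplemma} with $t=2$) for the dyadic averages; the whole point of that inequality is that the infinitely many scales at which $A_kf(x)$ varies can still be summed in $V^r$ for $r>2$, and no argument that reduces matters to boundedly many scales can reach it.

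What your Rademacher--Menshov-plus-orthogonality computation does control correctly is the ``long-jump'' variation sampled at the scales where $|\Omega_k|$ increases: there are at most $O(|\Xi|)$ of these, the block differences are projections onto pairwise disjoint frequency sets, and Lemma \ref{vnrmlemma} (which is the Lewko--Lewko result \cite{lewko12esv}, not \cite[Lemma 9.2]{oberlin11awm}) gives the logarithmic loss. What is missing is the ``short'' variation within a block of scales on which $|\Omega_k|$ is constant, say equal to $M$: there $\Delta_k$ is a sum of $M$ frequency-disjoint modulated averaging operators, and the naive estimate (triangle inequality for $V^r$, L\'epingle chain by chain, Cauchy--Schwarz) loses a factor $M^{1/2}\le|\Xi|^{1/2}$ rather than $|\Xi|^{\epsilon}$. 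Beating that $|\Xi|^{1/2}$ is precisely the content of Bourgain's multi-frequency lemma \cite{bourgain89pes} and of its variational extension \cite[Lemma 9.2]{oberlin11awm}, \cite{nazarov10czd} --- which is essentially the statement of the present lemma, not a Rademacher--Menshov inequality --- and this is why the paper quotes that result as a black box instead of proving it. As written, your proposal is therefore circular at the point where it invokes \cite{oberlin11awm} and has a genuine gap where it is self-contained.
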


Below, we have a variation-norm Rademacher-Menshov theorem which was proven in \cite{lewko12esv}, see also the proof of Theorem 4.3 in \cite{nazarov10czd}.
\begin{lemma} \label{vnrmlemma}
Let $X$ be a measure space and $f_1, \ldots, f_N$ be orthogonal functions on $X$. Then
\[
\|\sum_{j = 1}^n f_j(x)\|_{L^2_x(V^2_n)} \leq C (1 + \log(N)) (\sum_{j = 1}^{N}\|f_j\|^2_{L^2})^{1/2}.
\]
\end{lemma}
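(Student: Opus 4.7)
The plan is to adapt the classical Rademacher--Menshov maximal inequality to the variation-norm setting via a dyadic decomposition of each gap $(n_{j-1}, n_j]$ appearing in the defining supremum of $V^2_n$. Assume without loss of generality that $N = 2^K$ (pad with zeros). I would fix an arbitrary, possibly $x$-dependent, strictly increasing integer sequence $0 \le n_0 < n_1 < \cdots < n_L \le N$, and write $S_n := \sum_{i=1}^n f_i$ and $\Delta_I := \sum_{i \in I} f_i$ for any dyadic interval $I \subset \{1,\dots,N\}$. The key combinatorial input is the standard fact that each gap $(n_{j-1}, n_j]$ can be written as a disjoint union of dyadic intervals, with at most two intervals of length $2^k$ at each scale $k = 0, 1, \ldots, K$; denote these by $J_{j,k}^\ell$ and $J_{j,k}^r$ (possibly empty), so that $S_{n_j} - S_{n_{j-1}} = \sum_{k=0}^K \bigl(\Delta_{J_{j,k}^\ell} + \Delta_{J_{j,k}^r}\bigr)$.

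Next I would apply Cauchy--Schwarz to the resulting at-most-$2(K+1)$ summands and then sum over $j$:
\[
\sum_{j=1}^L |S_{n_j} - S_{n_{j-1}}|^2 \le 2(K+1)\sum_{k=0}^K \sum_{j=1}^L \bigl(|\Delta_{J_{j,k}^\ell}|^2 + |\Delta_{J_{j,k}^r}|^2\bigr).
\]
Since the gaps $(n_{j-1}, n_j]$ are pairwise disjoint as $j$ varies, at each fixed scale $k$ the dyadic intervals appearing as some $J_{j,k}^{\bullet}$ are pairwise distinct, and thus form a subset of the collection of all dyadic subintervals of $\{1,\dots,N\}$ of length $2^k$. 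Enlarging the sum (all terms being nonnegative) to range over this full collection produces the $\{n_j\}$-independent pointwise bound
\[
\sum_{j=1}^L |S_{n_j} - S_{n_{j-1}}|^2 \le 2(K+1)\sum_{k=0}^K \sum_{|I|=2^k} |\Delta_I|^2,
\]
which upon taking the supremum over admissible sequences dominates $\|S_n(x)\|_{V^2_n}^2$.

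Finally I would integrate in $x$ and invoke orthogonality: for each dyadic interval $I$, $\|\Delta_I\|_{L^2}^2 = \sum_{i \in I} \|f_i\|_{L^2}^2$, and hence $\sum_{|I|=2^k} \|\Delta_I\|_{L^2}^2 = \sum_{i=1}^N \|f_i\|_{L^2}^2$ at each of the $K+1$ scales. Summing yields the factor $(K+1)^2 = O((1+\log N)^2)$ on the square of the left-hand side of the lemma, and taking square roots delivers the claim. I do not expect a substantive obstacle: the central point is that the $x$-dependent choice of partition $\{n_j\}$ must be eliminated \emph{before} orthogonality is applied, which is precisely what the enlargement of each scale-$k$ sum to the full dyadic collection accomplishes; everything else is routine Cauchy--Schwarz and standard dyadic bookkeeping.
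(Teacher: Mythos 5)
Your argument is correct, and it is essentially the standard proof of this variation-norm Rademacher--Menshov inequality: the paper itself does not prove the lemma but cites \cite{lewko12esv} and the proof of Theorem 4.3 in \cite{nazarov10czd}, which proceed by the same dyadic decomposition of each gap (at most two blocks per scale), Cauchy--Schwarz giving one factor of $\log N$, enlargement to the full $x$-independent dyadic square function, and orthogonality giving the second factor. Nothing further is needed.
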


Finally we will need the following John-Nirenberg type lemma. See, for example, the proof of Lemma 4.2 in \cite{muscalu04leb}.

\begin{lemma} \label{jnlemma}
Let $\{c(P)\}_{P \in \P}$ be a collection of coefficients. Denote
\[
A_{1,\infty} = \sup_{T \subset \P} \frac{1}{|I_T|}\|(\sum_{P \in T}|c(P)|^2\frac{1_{I_P}}{|I_P|})^{1/2}\|_{L^{1,\infty}},
\]
\[
A_2 = \sup_{T \subset \P} \frac{1}{|I_T|^{1/2}}\|(\sum_{P \in T}|c(P)|^2\frac{1_{I_P}}{|I_P|})^{1/2}\|_{L^{2}}
\]
where both supremums are over (say) all $i$-lacunary trees. Then
\begin{equation} \label{jnconc}
A_2 \leq C A_{1,\infty}.
\end{equation}
\end{lemma}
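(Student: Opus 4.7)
The approach will be a John--Nirenberg style stopping-time argument on a single $i$-lacunary tree at a time. Fix such a tree $T\subseteq\P$ with top data $(I_T,\xi_T)$ and write $F(x)^2 := \sum_{P\in T}|c(P)|^2 |I_P|^{-1} 1_{I_P}(x)$. It suffices to prove $\|F\|_{L^2}^2 \leq C A_{1,\infty}^2 |I_T|$ uniformly in $T$; taking the supremum will then yield \eqref{jnconc}.

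Choose a threshold $\lambda=C_1 A_{1,\infty}$ with $C_1$ a large absolute constant, and for each dyadic $J\subseteq I_T$ put $\sigma(J):=\sum_{P\in T,\,I_P\supsetneq J}|c(P)|^2/|I_P|$, the portion of $F^2|_J$ coming from scales strictly above $J$. Let $\{J_l\}$ be the family of maximal dyadic $J\subseteq I_T$ with $\sigma(J)>\lambda^2$; these are pairwise disjoint. Since $\sigma(J)\nearrow F^2(x)$ as the dyadic $J\ni x$ shrinks, maximality forces $F^2\leq\lambda^2$ off $\bigcup_l J_l$. Conversely $F^2\geq\sigma(J_l)>\lambda^2$ on each $J_l$, so the weak-$L^1$ hypothesis gives $\sum_l|J_l|\leq A_{1,\infty}|I_T|/\lambda = |I_T|/C_1$.

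On each $J_l$ split $F^2=\sigma(J_l)+H_l^2$, where $H_l$ is the square function of the sub-tree $T|_{J_l}:=\{P\in T:\,I_P\subseteq J_l\}$, itself an $i$-lacunary tree with top $(J_l,\xi_T)$ contained in $\P$, so $\|H_l\|_{L^2}^2\leq A_2^2|J_l|$. The crucial point is that the jump $\sigma(J_l)-\sigma(\hat J_l)$ from the parent $\hat J_l$ is a sum over those $P\in T$ with $I_P=\hat J_l$, of which there is at most one: the tree condition $\xi_T\in\omega_P$ uniquely determines $\omega_P$ at each scale, and together with $I_P$ this pins $P$ down. Applying the weak-$L^1$ hypothesis to the singleton tree $\{P\}$ with top $I_P$ gives $|c(P)|^2/|I_P|\leq A_{1,\infty}^2$, so $\sigma(J_l)\leq\lambda^2+A_{1,\infty}^2\leq 2\lambda^2$. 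Summing the contributions,
\begin{equation*}
\|F\|_{L^2}^2 \leq \lambda^2|I_T| + 2\lambda^2\sum_l|J_l| + \sum_l A_2^2|J_l| \leq (1+2/C_1)\lambda^2|I_T| + A_2^2|I_T|/C_1.
\end{equation*}
Taking the supremum over $T$ and choosing $C_1\geq 2$ allows us to absorb the $A_2^2/C_1$ term, yielding $A_2\leq CA_{1,\infty}$.

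The main obstacle is the single-quartile bound combined with the stopping-time control that keeps $\sigma(J_l)$ comparable to $\lambda^2$ on each $J_l$; without the tree-structure uniqueness at a given scale the parent jump could be arbitrarily large. The absorption additionally requires $A_2<\infty$, which is a standard technical point handled by first restricting to a finite sub-collection of $\P$ (with bounds uniform in the restriction) and then passing to the limit.
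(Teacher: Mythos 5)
Your argument is correct: the selection of maximal dyadic intervals on which the above-scale part of the square function exceeds $C_1A_{1,\infty}$, the weak-type bound for their total length, the one-quartile-per-scale jump estimate $\sigma(J_l)\le\sigma(\hat J_l)+A_{1,\infty}^2$, and the final absorption of $A_2$ (legitimate after reducing to finite subcollections of $\P$) all check out. This is essentially the standard John--Nirenberg/Calder\'on--Zygmund stopping-time argument that the paper itself invokes only by citation, so the approach matches the intended proof.
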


\section{A variation-norm size bound} \label{sbsection}

Let $M^t$ denote the dyadic $L^t$-Hardy-Littlewood maximal operator
\begin{equation}\label{e.dyadicHL}
M^t[f](x) = \sup_k (A_k[|f|^t](x))^{1/t}.
\end{equation}

\begin{proposition} \label{propositionvnsb}
Let $\lambda > 0, r > 2,$ and $1 < t < \infty$. Suppose that $\P$ is a collection of quartiles such that for each $P \in \P$
\begin{equation} \label{notbadtileeq}
I_P \not\subset \{M^t[f] > \lambda\}.
\end{equation}
Then for each $i$
\[
\size_i(\P,f) \leq C_{r,t} \lambda.
\]
\end{proposition}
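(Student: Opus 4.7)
The plan is to follow the standard BMO-style size bound argument, reducing via Lemma \ref{jnlemma} to a weak-type estimate on the natural square function, and then carrying out a Calder\'on--Zygmund decomposition.

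First I would reduce to the case $I_T = I_{P^*}$ for some $P^* \in T$. Given an arbitrary $i$-lacunary tree $T \subset \P$ with top $(\xi_T, I_T)$, pick $P^* \in T$ with $|I_{P^*}|$ maximal; since every $I_P \subset I_T$ is dyadic we actually have $I_P \subset I_{P^*}$, and the frequency condition $\xi_T \in \omega_P$ is unchanged, so $(T, \xi_T, I_{P^*})$ is still a valid $i$-lacunary tree. Shrinking the top from $I_T$ to $I_{P^*}$ only enlarges the normalized sum $|I_T|^{-1} \sum_P |\<f, \tphi_{P_i}\>|^2$, so it suffices to control such ``tight'' trees. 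The hypothesis $I_{P^*} \not\subset \{M^t f > \lambda\}$ then supplies $x_0 \in I_T$ with $M^t f(x_0) \leq \lambda$, and dyadicity of $I_T$ yields $\|f \cdot 1_{I_T}\|_{L^t} \leq \lambda |I_T|^{1/t}$.

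Next, observing that $\sum_{P \in T} |\<f, \tphi_{P_i}\>|^2 = \|S_T\|_{L^2}^2$ for the square function $S_T := (\sum_P |\<f, \tphi_{P_i}\>|^2 \, 1_{I_P}/|I_P|)^{1/2}$, the quantity $A_2$ of Lemma \ref{jnlemma} coincides with $\size_i(\P, f)$. By that lemma it suffices to prove $\|S_T\|_{L^{1,\infty}(I_T)} \lesssim \lambda |I_T|$ uniformly over tight trees $T$. I would establish this via a Calder\'on--Zygmund decomposition $f = g + b$ at level $\lambda$ relative to $|f|^t$ on $I_T$: $\|g\|_{L^\infty} \lesssim \lambda$ and $b = \sum_Q b_Q$ is supported on maximal dyadic $Q \subset I_T$ with $\int b_Q = 0$, $\sum_Q |Q| \lesssim |I_T|$, and $Q \subset \{M^t f > \lambda\}$. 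The good part is handled by Bessel's inequality together with the $L^2 \hookrightarrow L^{1,\infty}$ embedding on finite-measure sets: $\|S_T^g\|_{L^2}^2 = \sum_P |\<g, \tphi_{P_i}\>|^2 \lesssim \|g\|_{L^2(I_T)}^2 \lesssim \lambda^2 |I_T|$, so $\|S_T^g\|_{L^{1,\infty}(I_T)} \leq |I_T|^{1/2} \|S_T^g\|_{L^2} \lesssim \lambda |I_T|$. For the bad part, the key structural observation is that the hypothesis $I_P \not\subset \{M^t f > \lambda\}$ forces each CZ interval $Q$ to be strictly contained in (or disjoint from) every $I_P$; combined with $\int b_Q = 0$ and the mean-zero / piecewise-constant-on-small-scales behaviour of Walsh wave packets encoded in Claim \ref{averageclaim}, this produces the required cancellation between $b_Q$ and $\tphi_{P_i}$ to match the bound for $S_T^b$.

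The principal obstacle I anticipate is the $i = 3$ case, where the pointwise-in-$x$ weights $a_P(x)$ in $\tphi_{P_3}(x) = a_P(x) \phi_{P_3}(x)$ destroy the clean orthogonality that underlies the Bessel step above. The workaround is to exploit the pointwise bound $|a_P(x)| \leq 1$ (a consequence of $\sum_j |a_j(x)|^{r'} \leq 1$) so that $|\tphi_{P_3}| \leq |\phi_{P_3}|$, combined with the disjointness of the tiles $\{P_3\}_{P \in T}$ in a $3$-lacunary tree; together these recover enough quasi-orthogonality (via Cauchy--Schwarz after conditioning on the value of $m(P, x)$, which groups quartiles according to the variation-norm level) to complete the estimate.
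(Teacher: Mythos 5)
Your reduction via Lemma \ref{jnlemma} and the treatment of $i\neq 3$ (Bessel for the good part, mean-zero cancellation for the bad part of a Calder\'on--Zygmund decomposition at level $\lambda$) is essentially a viable, classical route for the unweighted wave packets, modulo the usual care at the one or two boundary scales where a bad interval $Q$ with $|Q|=|I_P|/2$ is too large for $\phi_{P_i}$ to be constant on it. But the heart of the proposition is the case $i=3$, and there your workaround has a genuine gap. First, the Bessel-type inequality $\sum_{P\in T}|\<g,\tphi_{P_3}\>|^2\lesssim\|g\|_{L^2}^2$ does not follow from $|a_P(x)|\le 1$ plus disjointness of the tiles $P_3$: the weight $a_P(x)=a_{m(P,x)}(x)$ is constant across an entire block of scales $2^{k_{m-1}(x)}\le |I_P|<2^{k_m(x)}$, so the $\ell^2$-in-scale norm of the weights at a fixed $x$ is unbounded, and your proposed ``Cauchy--Schwarz after conditioning on $m(P,x)$'' leaves you needing an $L^2$ bound for $\bigl(\sum_m |B_m(x)|^2\bigr)^{1/2}$, where $B_m(x)$ are differences of truncated tree sums at the $x$-dependent cutoffs $k_m(x)$. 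That is precisely a $2$-variation (jump) estimate with data-dependent stopping times, and such estimates are not uniformly true at exponent $2$ (L\'epingle fails at $r=2$; Rademacher--Menshov only gives a logarithmic loss in the number of scales, which is unbounded over trees). This is exactly why the hypothesis $r>2$ appears and why the paper routes the estimate through the $V^r_k$ norm: using $\sum_j|a_j(x)|^{r'}\le 1$ and H\"older in $\ell^{r'}\times\ell^r$, the pointwise weighted sum is dominated by the $V^r$ norm of the truncated tree sums, which by Claim \ref{averageclaim}/\eqref{vartrunc} are dyadic averages $A_k$ of a fixed function, so L\'epingle's inequality (Lemma \ref{leplemma}) applies. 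Your proposal never invokes these ingredients and never uses $r>2$, which is a sign that the key difficulty has not been engaged.

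Second, even granting a Bessel step, your bad-part argument also collapses for $i=3$: the cancellation $\int b_Q\,\tphi_{P_3}=0$ relies on the wave packet being constant on $Q$, but $\tphi_{P_3}(x)=a_P(x)\phi_{P_3}(x)$ contains the arbitrary bounded measurable factor $a_P(\cdot)$, so these inner products need not vanish no matter how small $Q$ is. The paper's proof avoids both problems by dualizing (so the rough weights act on the output side), passing to the variation norm, and then applying L\'epingle together with standard dyadic Calder\'on--Zygmund theory for the unweighted tree operator; some version of that variational input is indispensable here.
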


\begin{proof}
Using Lemma \ref{jnlemma}, it follows from \eqref{sizedef} that
\[
\size_i(\P,f) \leq C \sup_{T \subset \P} |I_T|^{-1/t} \|(\sum_{P \in T}|\<f,\tphi_{P_i}\>|^2\frac{1_{I_P}}{|I_P|})^{1/2}\|_{L^t}
\]
which, by the usual Rademacher function argument, is
\[
\leq C_t \sup_{T \subset \P} \sup_{\{b_P\}_{P \in \P}}|I_T|^{-1/t} \|\sum_{P \in T} b_P \<f,\tphi_{P_i}\> \phi_{P_i} \|_{L^t}
\]
where the right supremum is over all sequences $\{b_P\}_{P \in \P}$ of $\pm1$'s.

Let $t' = t/(t - 1)$. For any nonempty $i$-lacunary tree $T\subset \P$ and any binary sequence $\{b_P\}$, by duality we have
\begin{align*}
&|I_T|^{-1/t} \|\sum_{P \in T} b_P \<f,\tphi_{P_i}\> \phi_{P_i} \|_{L^t} \\
&\leq |I_T|^{-1/t} \|1_{I_T}f\|_{L^t} \sup_{g : \|g\|_{L^{t'} = 1}} \|\sum_{P \in T} b_P \<g,\phi_{P_i}\> \tphi_{P_i} \|_{L^{t'}}  \\
&\leq \lambda \sup_{\|g\|_{L^{t'} = 1}} \|\sum_{P \in T} b_P \<g,\phi_{P_i}\> \tphi_{P_i} \|_{L^{t'}}   \qquad \qquad \text{(using \eqref{notbadtileeq})} \\
&\leq \lambda \sup_{\|g\|_{L^{t'} = 1}} \|\sum_{P \in T: \,\, |I_P| \geq 2^k} b_P \<g,\phi_{P_i}\> \phi_{P_i}(x) \|_{L^{t'}_x(V^r_k)} \qquad \text{(by def. of $\tphi_{P_i}$)}\\
&= \lambda \sup_{\|g\|_{L^{t'} = 1}} \|A_k[\sgn(\phi_{p_T})\sum_{P \in T} b_P \<g,\phi_{P_i}\> \phi_{P_i}](x) \|_{L^{t'}_x(V^r_k)}  \qquad \text{(by \eqref{vartrunc})} \\
&\leq C_{r,t'}\lambda \sup_{\|g\|_{L^{t'} = 1}} \|\sum_{P \in T} b_P \<g,\phi_{P_i}\> \phi_{P_i}\|_{L^{t'}}  \qquad \text{(by Lemma \ref{leplemma})} \\
&\leq C_{r,t'} \lambda \qquad \text{(by standard dyadic Calder\'{o}n-Zygmund theory).} 
\end{align*}
Note that much of the argument above is superfluous unless $i=3$.
\end{proof}

\section{A variation-norm size lemma} \label{sisection}
The main result in this section is Proposition~\ref{vnsprop}, and its proof requires Propositions~\ref{mainpropositionstrong}  and \ref{mainpropositionweak}. We assume throughout this section that $\epsilon>0$ and $r>2$, and all implicit constants are allowed to depend on $\epsilon$ and $r$.
\begin{proposition} \label{vnsprop}
Let $\P$ be a finite collection of quartiles. Suppose $|f| \leq 1_E.$ Then for each $\alpha$ satisfying
\[
\size_i(\P, f)^2 \leq \alpha
\]
we can find a collection of trees $\T$, each contained in $\P$, satisfying 
\[
\size_i(\P \setminus \bigcup_{T \in \T}T, f)^2 \leq \frac{1}{4} \alpha,
\]
\begin{equation} \label{vnslconcNeq}
\sum_{T \in \T} |I_T| \leq C  \alpha^{-(1+ \epsilon)} |E|.
\end{equation}
\end{proposition}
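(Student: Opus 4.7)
The plan is a greedy tree-selection stopping-time argument in the spirit of Lacey--Thiele, combined with a variational orthogonality estimate. Initialize $\T = \emptyset$ and, while there exists an $i$-lacunary tree $T \subset \P \setminus \bigcup_{T' \in \T} T'$ satisfying $\sum_{P \in T}|\<f,\tphi_{P_i}\>|^2 > (\alpha/4)|I_T|$, select such a tree with maximal top interval and add it to $\T$. Since $\P$ is finite this terminates, and the stopping criterion forces $\size_i(\P \setminus \bigcup_{T \in \T} T, f)^2 \le \alpha/4$. Summing the selection inequality yields
\begin{equation*}
\alpha \sum_{T \in \T} |I_T| \;\le\; 4 \sum_{T \in \T}\sum_{P \in T}|\<f,\tphi_{P_i}\>|^2,
\end{equation*}
so \eqref{vnslconcNeq} reduces to showing that the right-hand side is at most $C \alpha^{-\epsilon}|E|$.

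To bound the double sum I would dualize, writing it as $\<f, g\>$ for $g = \sum_{T \in \T} \sum_{P \in T}\<f, \tphi_{P_i}\>\tphi_{P_i}$, and then estimate $\|g\|_{L^2}$. Expanding $\|g\|_{L^2}^2$ produces diagonal terms (one per $P$) and cross terms, and the cross terms constitute the real work. The single-tree cross terms are handled via Claim~\ref{averageclaim} together with \eqref{vartrunc}, which rewrites a tree's contribution as an $L^2_x(V^r_k)$ expression driven by the dyadic averages $A_k$; the $V^r$-norm here absorbs the linearization coefficients $a_P(x)$ that are the source of the failure of orthogonality. The cross-tree terms are organized by the top frequencies, with $\Xi = \{\xi_T : T \in \T\}$ playing the role of the frequency set in Lemma~\ref{vblemma}, and that lemma then yields a bound of order $|\Xi|^{\epsilon}\|f\|_{L^2}^2 \le |E| \cdot |\T|^{\epsilon}$. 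A combinatorial bound on $|\T|$ (which can be as crude as $|\T| \le C\alpha^{-1}|E|$, obtained by first running the argument with a looser $L^{1,\infty}$-type size estimate via Lemma~\ref{jnlemma}) then absorbs the $\epsilon$-loss into a factor of $\alpha^{-\epsilon}$, yielding \eqref{vnslconcNeq}.

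The main obstacle is the case $i = 3$: although the $P_3$ tiles in a $3$-lacunary tree are pairwise disjoint (so $\{\phi_{P_3}\}$ is an orthogonal system), the linearized wave packets $\tphi_{P_3} = a_P \phi_{P_3}$ are not orthogonal, since $a_P(x)$ depends simultaneously on the scale $|I_P|$ and on $x$. Lemma~\ref{vblemma} is the tool that recovers an $L^2$ estimate in this setting at the cost of an $\epsilon$-power of the frequency cardinality, and this cost is precisely what upgrades the classical $\alpha^{-1}|E|$ bound to the stated $\alpha^{-(1+\epsilon)}|E|$. The subtlest implementation point will be ensuring that the $\epsilon$-loss appears only as a power of $\alpha$ and not of $|E|$; this is presumably why the proof in the paper is split into the auxiliary Propositions~\ref{mainpropositionstrong} and \ref{mainpropositionweak}, likely a strong $L^2$-type estimate and a complementary weak $L^{1,\infty}$-type John--Nirenberg estimate that are combined to yield the stated bound with a clean dependence on $|E|$.
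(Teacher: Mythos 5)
There are two genuine gaps. First, your tree selection does not deliver the disjointness that the whole counting argument hinges on. Picking an $i$-lacunary tree ``with maximal top interval'' and removing only that tree gives no control on how the tiles $P_i$ from different selected trees overlap; in particular the functions $\phi_{P_i}$ (or $\tphi_{P_i}$) coming from distinct trees need not be orthogonal, so neither your Bessel-type bound for $i\neq 3$ nor the hypotheses of Proposition~\ref{mainpropositionstrong} (which explicitly requires the $P_3$'s from all selected trees to be pairwise disjoint) are available. The paper's selection is more careful precisely for this reason: for each $j\neq i$ it selects $j$-overlapping trees $S$ with mass at least $\alpha/4$, choosing $\inf\omega_S$ maximal or minimal according to whether $j<i$ or $j>i$, and then removes the \emph{maximal} tree in the remaining collection with the same top data $(I_S,\xi_S)$. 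Both features (frequency-extremal choice and removal of the full maximal tree with that top) are used in the short geometric argument showing that the sets $\bigcup_{P\in S_l^k}P_i$ are pairwise disjoint in $l$; without them that argument fails.

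Second, your treatment of the $i=3$ case does not close. The plan to apply Lemma~\ref{vblemma} with the global frequency set $\Xi=\{\xi_T: T\in\T\}$ and then absorb $|\T|^\epsilon$ via a ``crude'' bound $|\T|\le C\alpha^{-1}|E|$ is unjustified: top intervals can be arbitrarily short, so $\sum_T|I_T|\le C\alpha^{-1}|E|$ gives no bound on the cardinality $|\T|$, and even if it did you would be left with an $|E|^\epsilon$ loss, which is exactly what the proposition forbids. The paper's resolution, which you gesture at but do not supply, is structural: Proposition~\ref{mainpropositionweak} proves a distributional estimate $|\{N>\lambda\}|\le C\alpha^{-1}\lambda^{-(1-\epsilon)}\|f\|_2^2$ for the counting function $N=\sum_T 1_{I_T}$, in which the number of frequencies fed into Lemma~\ref{vblemma} is controlled \emph{locally} by the level of the counting function (the $L^\infty$ bound \eqref{linftyNl} and the nested decomposition into the collections $\calI_m$ give $|\Xi|\le 2^{l+1}\lambda$ on each local interval, and the long jumps across the $\calI_m$-scales are handled by the Rademacher--Menshov Lemma~\ref{vnrmlemma}); then Proposition~\ref{mainpropositionstrong} upgrades this to the $L^1$ bound $\|N\|_{L^1}\le C\alpha^{-(1+\epsilon)}\|f\|_{2+2\epsilon}^{2+2\epsilon}$ by a good-$\lambda$ argument against the level sets of $M^2f$, and $\|f\|_{2+2\epsilon}^{2+2\epsilon}\le|E|$ yields the clean $|E|$ dependence. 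Without a substitute for this local frequency-count control and the good-$\lambda$ step, your argument does not produce the bound \eqref{vnslconcNeq}.
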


\begin{proof}
Let $j_1, j_2, j_3$ be an enumeration of $\{1,2,3,4\} \setminus \{i\}$
and $\P_0^1 = \P$. If there is a $j_1$-overlapping tree $S \subset \P^1_0$ satisfying 
\begin{equation} \label{biglac}
\frac{1}{|I_{S}|}\sum_{P \in S}|\<f,\tphi_{P_i}\>|^2 \geq \frac{1}{4} \alpha 
\end{equation}
then let $S_1^1$ be such a tree, chosen in the following manner:\\
\indent (i) If $j_1 < i$ then we pick such $S_1^1$ with $\inf \omega_{S_1^1}$ maximal. \\
\indent (ii) If $j_1 > i$ then we pick such $S_1^1$ with $\inf \omega_{S_1^1}$ minimal.\\   
We then let $T_1^1$ be the maximal (with respect to inclusion) tree contained in $\P^1_0$ with top data $(I_{S_1^1},\xi_{S_1^1})$. 

Now, consider $\P_1^1 = \P_0^1 - T_1^1$ and iterate the above selection process until no trees satisfying \eqref{biglac} can be found (the process must stop in finite time due to the assumption that $\P$ is finite), we obtain trees $T_1^1, \ldots, T_{n_1}^1$ and $S_1^1, \ldots, S_{n_1}^1$ where $S_1^1, \ldots, S_{n_1}^1$ are $j_1$-overlapping trees and satisfy $\eqref{biglac}$, and with $S_{i}^1 \subset T_{i}^1$ for each $i$ .  

We then consider the remaining tile collection $\P_0^2 = \P_0^1 - T_1^1 - \dots - T_{n_1}^1$ and repeat the same process as above, but choosing $j_2$-overlapping trees instead of $j_1$-overlapping trees. This gives $S_1^2, \ldots, S_{n_2}^2$ which are inside more general trees $T_1^2, \ldots, T_{n_2}^2$. Finally we select $j_3$-overlapping trees from the remaining tile collection and  obtain $j_3$-overlapping trees $S_1^3, \ldots, S_{n_3}^3$ which are inside trees $T_1^3, \ldots, T_{n_3}^3$.

By construction we have
\[
\size_i(\Q, f)^2 \leq \frac{1}{4} \alpha 
\qquad \text{where} \qquad \Q = \P \setminus \bigcup_{k = 1,2,3} \bigcup_{l = 1}^{n_k}T_{l}^k.
\]
Thus, it remains to show that for each $k$
\[
\sum_{l=1}^{n_k} |I_{S_l^k}| \leq C \alpha^{-(1 + \epsilon)} |E|.
\]
To verify this estimate, first note that the sets $\bigcup_{P \in S_l^k} P_i$ indexed by $l$ are pairwise disjoint. Indeed, suppose $P_i \cap P'_i \neq \emptyset$, $P \in S_l^k,$ $P' \in S_{l'}^k$, and $l < l'$. By geometry, the maximality/minimality of $\inf \omega_{S_l^k}$ guarantees that $P' \in T_l^k$, contradicting the fact that $P' \in S_{l'}^k.$ 

Now, if $i\ne 3$ then by orthogonality of the $\tphi_{P_i}$ and \eqref{biglac} we have
\[
\sum_{l=1}^{n_k} |I_{S_l^k}| \leq C \alpha^{-1} \|f\|_{2}^{2}
\]  
which implies \eqref{vnslconcNeq} (the assumption that $f \leq 1_E$ guarantees that any $\size_i(\P,f) \leq 1$ and so we may assume that $\alpha \leq 4$).

If $i=3$, orthogonality between $\tphi_{P_3}$ is not available, and we apply Proposition \ref{mainpropositionstrong} below. 
\end{proof}

\begin{proposition} \label{mainpropositionstrong} 
Suppose that $\T$ is a finite collection of 3-lacunary trees such that
the elements of $\{P_3:\, P\in \bigcup_{T \in \T} T\}$ are pairwise disjoint
and furthermore for each $T \in \T$
\begin{equation} \label{largesizeineq}
\frac{1}{|I_T|} \sum_{P \in T } |\<f,\tphi_{P_3}\>|^2 \geq \alpha.
\end{equation}
Then for $N := \sum_{T \in \T}1_{I_T}$ we have
\begin{equation} \label{mainpropstrongconc}
\|N\|_{L^1} \leq C  \alpha^{-(1+\epsilon)} \|f\|^{2 + 2\epsilon}_{L^{2 + 2\epsilon}}.
\end{equation}
\end{proposition}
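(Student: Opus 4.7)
The plan is to linearize the hypothesis via a Rademacher randomization, dualize the resulting $\ell^2$ sum, reduce the dualized expression to a variation-norm bound on a partial sum of an orthogonal series via Claim~\ref{averageclaim}, and then close the estimate using the auxiliary lemmas of Section~\ref{auxsection} combined with interpolation.

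\textbf{Step 1 (Randomization).} For each tree $T\in\T$, averaging $|\sum_{P\in T}r_P\langle f,\tphi_{P_3}\rangle|^2$ over independent Rademacher signs $\{r_P\}$ yields the identity $\mathbb{E}|\cdots|^2=\sum_{P\in T}|\langle f,\tphi_{P_3}\rangle|^2$. Hence one can select signs $\{\epsilon_P^T\}_{P\in T}\subset\{\pm 1\}$ so that, with $F_T:=\sum_{P\in T}\epsilon_P^T\tphi_{P_3}$,
\[
|\langle f,F_T\rangle|^2\ \geq\ \sum_{P\in T}|\langle f,\tphi_{P_3}\rangle|^2\ \geq\ \alpha|I_T|.
\]
Summing in $T$, it suffices to prove $\sum_{T\in\T}|\langle f,F_T\rangle|^2\lesssim_\epsilon\alpha^{-\epsilon}\|f\|_{L^{2+2\epsilon}}^{2+2\epsilon}$.

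\textbf{Step 2 (Pointwise variational domination).} Unpacking $\tphi_{P_3}(x)=a_P(x)\phi_{P_3}(x)$ into its telescoping form via the linearizing sequences $\{a_m(x)\}$, and invoking $\sum_m|a_m(x)|^{r'}\leq 1$ together with H\"older's inequality in the $\ell^{r'}\!-\!V^r$ duality, one obtains for any scalars $\{c_T\}$ the pointwise bound
\[
\Bigl|\sum_{T\in\T}c_T F_T(x)\Bigr|\ \leq\ \|\mathcal H_k(x)\|_{V^r_k},\qquad \mathcal H_k(x):=\sum_{T\in\T}c_T\!\!\sum_{\substack{P\in T\\|I_P|\geq 2^k}}\!\!\epsilon_P^T\phi_{P_3}(x).
\]

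\textbf{Step 3 (Variation estimate for $\mathcal H_k$).} The hypothesis that $\{P_3:P\in\bigcup_T T\}$ consists of pairwise disjoint tiles makes $\{\phi_{P_3}\}_{(T,P)}$ an $L^2$-orthonormal system, so $\mathcal H_k$ is a scale-indexed partial sum of the fixed orthogonal series $\phi:=\sum_{(T,P)}c_T\epsilon_P^T\phi_{P_3}$. Each wave packet $\phi_{P_3}$ appearing in $\mathcal H_k$ has Fourier support in $\omega_{P_3}\subset\omega_P$, which lies within a dyadic interval of length $O(2^{-k})$ containing $\xi_T$. Consequently $\mathcal H_k$ may be interpreted (up to a bounded scale shift) as the frequency projection $\Delta_k\phi$ onto the $2^{-k+O(1)}$-neighborhood of $\Xi:=\{\xi_T:T\in\T\}$, and Lemma~\ref{vblemma} yields
\[
\bigl\|\,\|\mathcal H_k(x)\|_{V^r_k}\,\bigr\|_{L^2_x}\ \lesssim_\epsilon\ |\Xi|^\epsilon\|\phi\|_{L^2}.
\]

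\textbf{Step 4 (Dualize, interpolate, close).} By the $\ell^2$ dual representation, $(\sum_T|\langle f,F_T\rangle|^2)^{1/2}=\sup_{\|c\|_{\ell^2}\leq 1}|\langle f,\sum_T c_T F_T\rangle|$; H\"older in $x$ and Step~2 give
\[
\Bigl(\sum_T|\langle f,F_T\rangle|^2\Bigr)^{\!1/2}\ \leq\ \|f\|_{L^{2+2\epsilon}}\sup_{\|c\|_{\ell^2}\leq 1}\bigl\|\,\|\mathcal H_k\|_{V^r_k}\,\bigr\|_{L^{(2+2\epsilon)'}_x}.
\]
Interpolating the Step-3 estimate against a trivial $L^\infty$-type endpoint, and absorbing $|\Xi|\leq\#\T$ using the hypothesis $\alpha|I_T|\leq|\langle f,F_T\rangle|^2$, converts the $|\Xi|^\epsilon$-loss into a factor $\alpha^{-\epsilon/2}\|f\|_{L^{2+2\epsilon}}^{\epsilon}$. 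Combining with Step~1 gives the claim.

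\textbf{Main obstacle.} In the $i\in\{1,2,4\}$ cases of Proposition~\ref{vnsprop} the wave packets $\tphi_{P_i}=\phi_{P_i}$ are mutually $L^2$-orthogonal across the selected trees, so Bessel's inequality closes the estimate immediately. For $i=3$ the linearizing factor $a_P$ destroys this orthogonality even when $P_3\cap P'_3=\emptyset$. The principal challenge is to substitute for the lost orthogonality by extracting the variational structure via Claim~\ref{averageclaim}, exploiting the disjointness of the \emph{undressed} $\phi_{P_3}$'s, and invoking Lemma~\ref{vblemma} with only a polynomial loss $|\Xi|^\epsilon$; balancing this loss against the $L^{2+2\epsilon}$-norm of $f$ through interpolation to produce exactly the $\alpha^{-(1+\epsilon)}$ factor in the conclusion is the most delicate part of the argument.
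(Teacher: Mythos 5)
Your overall orientation --- dualize the $\ell^2$ sum, unravel $\tphi_{P_3}$ into a $V^r_k$ bound on scale-truncated sums of the pairwise disjoint $\phi_{P_3}$'s, and invoke Lemma \ref{vblemma} --- matches the skeleton of the paper's argument, and your ``main obstacle'' paragraph correctly identifies why $i=3$ is the hard case. But two essential steps are wrong or missing. First, the identification in Step 3 of $\mathcal H_k$ with $\Delta_{k+O(1)}[\phi]$ is false: truncating by $|I_P|\ge 2^k$ is not a frequency projection. For a quartile $P$ with $|I_P|<2^k$, the interval $\omega_{P_3}$ has length $>2^{-k}$ and may well contain dyadic intervals of length $\sim 2^{-k}$ that meet $\Xi$ (coming from \emph{other} trees' top frequencies), so $\Delta_{k+O(1)}[\phi_{P_3}]$ is a nonzero, nontrivial partial projection of $\phi_{P_3}$ rather than $0$. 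This is exactly why the paper runs a long-jump/short-jump decomposition \eqref{longshortbound} at the scales $k_j$ where $|\Omega_{k+4}|>|\Omega_{k-4}|$: the identity \eqref{insertdelta} holds only \emph{within} a short block $k_j\le k<k_{j+1}$ (where the local structure of $\Xi$ does not change), and the long jumps across blocks must be handled separately by the Rademacher--Menshov Lemma \ref{vnrmlemma}, which your argument never uses. Second, the $|\Xi|^{\epsilon}$ loss cannot be absorbed globally as claimed in Step 4: $|\Xi|$ can be as large as $\#\T$, and \eqref{largesizeineq} gives no bound on $\#\T$ in terms of $\alpha$ and $\|f\|$ alone (take many trees with very small $|I_T|$). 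The paper controls the number of contributing top frequencies only \emph{locally}: the stopping decomposition into $\T_l$, $\calI_m$, and $\P(I)$ yields $|\T(I)|\le 2^{l+1}\lambda$ via the $L^\infty$ bound \eqref{linftyNl}, the resulting $(2^l\lambda)^{\epsilon}$ losses are summed into the weak-type estimate of Proposition \ref{mainpropositionweak}, and that weak-type bound is then converted into \eqref{mainpropstrongconc} by a good-$\lambda$ comparison with $M^2f$ --- which is where the exponent $2+2\epsilon$ actually enters. Your ``interpolation against a trivial $L^\infty$ endpoint'' has no counterpart that performs this work.

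A further, more easily repaired defect: the Rademacher selection in Step 1 destroys the Bessel structure you rely on in Step 3. With $\phi=\sum_{T}c_T\sum_{P\in T}\epsilon^T_P\phi_{P_3}$ one has $\|\phi\|_{L^2}^2=\sum_T |c_T|^2\,\#T$ by orthonormality, which is not controlled by $\|c\|_{\ell^2}\le 1$, so the right-hand side of your Step 3 estimate is unbounded over the dual ball. The randomization is unnecessary: as in the paper, one should dualize $\sum_P|\<f,\tphi_{P_3}\>|^2$ directly against $\sum_P\<f,\phi_{P_3}\>\phi_{P_3}$, whose $L^2$ norm is controlled by Bessel's inequality.
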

\begin{proof}
We'll show that if $c>0$ is sufficiently small then for $\lambda \geq 1$ 
\begin{equation} \label{goodlambda}
|\{N > \lambda\}| \leq |E_\lambda| + \frac{1}{100} |\{N > \lambda/4\}|\ , \qquad \text{where}
\end{equation}
$$E_\lambda:=\{M^2 f > c \alpha^{1/2} \lambda^{1/(2 + 2\epsilon)}\},$$
and $M^2f$ is the $L^2$ dyadic Hardy-Littlewood maximal function (see \eqref{e.dyadicHL}). Once this is done, we can integrate both sides of \eqref{goodlambda},
\begin{align*}
\|N\|_{L^1} &\leq \int |E_\lambda| \ d\lambda + \int \frac{1}{100} |\{N > \lambda/4\}|  d\lambda   \\
&= C\, \alpha^{-(1 + \epsilon)}\|M^2 f \|_{2 + 2\epsilon}^{2 + 2\epsilon} + \frac{1}{25} \|N\|_{L^1} \\
&\leq  C\, \alpha^{-(1 + \epsilon)}\|f\|_{2 + 2\epsilon}^{2 + 2\epsilon} + \frac{1}{25} \|N\|_{L^1},
\end{align*}
and obtain the desired claim \eqref{mainpropstrongconc}.

Let $\calI$ be the collection of maximal dyadic intervals contained in $\{N > \lambda/4\}$. This collection clearly covers $\{N > \lambda\}$. Thus, \eqref{goodlambda} will follow if for any $I \in \calI$ that intersects the set $E_\lambda$ it holds that 
\[
|\{N > \lambda \} \cap I| \leq \frac{1}{100}|I|.
\]
To see this, take $I$ be such an interval. Then
\[
\|1_I f\|_{L^2} \le |I|^{1/2} \inf_{x\in I} M^2[f](x)
\]
\begin{equation}\label{e.localL2} 
\leq |I|^{1/2} c \alpha^{1/2} \lambda^{1/(2 + 2\epsilon)}.
\end{equation}
It follows from the maximality of $I$ that
\[
\{N > \lambda\} \cap I \subset \{N_I > \lambda/4\} \qquad \text{where} \qquad N_I := \sum_{T \in \T: \, \, I_T \subset I} 1_{I_T} .
\]
Finally, applying Proposition \ref{mainpropositionweak} with $1_If$ in place of $f$ and $\{T \in \T : I_T \subset I\}$ in place of $\T$, we obtain for some $C'$ depends on $r,\epsilon'$:
\begin{align*}
|\{N_I \geq \lambda/4\}| & \leq C' \, \alpha^{-1} \lambda^{-(1-\epsilon')} \|1_If\|_{L^2}^2 \\
&\leq C' \, \alpha^{-1} \lambda^{-(1-\epsilon')} \,\, c^2 \alpha \lambda^{\frac{1}{1 + \epsilon}}  \,\, |I| \qquad \text{(by \eqref{e.localL2})} \\
&\leq |I|/100
\end{align*}
where the last inequality follows by choosing $\epsilon' = \frac{\epsilon}{1 + \epsilon}$, and a sufficiently small choice of $c$ depending on $C'$.
\end{proof}

\begin{proposition} \label{mainpropositionweak} 
Suppose that $\T$ and $N$ are as in the hypotheses of Proposition \ref{mainpropositionstrong}. 
Then
\begin{equation} \label{mainpropconc}
|\{x : N(x) > \lambda\}| \leq C \alpha^{-1} \frac{\|f\|^2_{L^2}}{\lambda^{1 - \epsilon}}.
\end{equation}
\end{proposition}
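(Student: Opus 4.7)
The plan is to convert the size hypothesis \eqref{largesizeineq} into a bound on $\sum_T|I_T|$ via a $TT^*$-style duality, reduce the dual-side $L^2$-norm to the $L^2 V^r$-norm of a cumulative sum $H_k$, and control $H_k$ using Lemma \ref{vblemma}. A final Chebyshev inequality then yields the weak-type bound.

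\textbf{Dualization and reduction to a variation norm.} Summing the hypothesis \eqref{largesizeineq} over $T$ and applying Cauchy--Schwarz gives
\[
\alpha\sum_{T\in\T}|I_T|\le\sum_{T,P\in T}|\langle f,\tphi_{P_3}\rangle|^2=\langle f,G\rangle\le\|f\|_{L^2}\|G\|_{L^2},
\]
with $G:=\sum_{T,P\in T}\langle f,\tphi_{P_3}\rangle\,\tphi_{P_3}$. Using $\tphi_{P_3}(x)=a_P(x)\phi_{P_3}(x)$, grouping the $P$-sum by the linearization index $m(P,x)$, and applying H\"older in $m$ with the constraint $\sum_m|a_m(x)|^{r'}\le 1$, I obtain the pointwise bound $|G(x)|\le\|H_k(x)\|_{V^r_k}$ where
\[
H_k(x):=\sum_{T,P\in T:\,|I_P|\ge 2^k}\langle f,\tphi_{P_3}\rangle\,\phi_{P_3}(x).
\]
The task reduces to bounding $\|H_k\|_{L^2_x V^r_k}$.

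\textbf{Bourgain projection and bootstrap.} For each 3-lacunary tree $T$, Claim \ref{averageclaim} rewrites the inner truncated sum as a dyadic average of the full tree sum, and the recursive formulas \eqref{recursive} express each $\phi_{P_3}$ with $|I_P|>2^k$ as a sum of wave packets at the common scale $2^k\times 2^{-k}$ with frequency equal to the dyadic ancestor of $\omega_{P_3}$ of length $2^{-k}$. Choosing $\Xi$ to be $\{\xi_T:T\in\T\}$ enlarged by a few neighboring points to capture the cases where $\omega_{P_3}$ is adjacent to --- rather than contained in --- the dyadic interval of length $2^{-k}$ near $\xi_T$, these expansions give $H_k=\Delta_k[g]$, where $g:=\sum_{T,P}\langle f,\tphi_{P_3}\rangle\phi_{P_3}$ and $|\Xi|\le C|\T|$. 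Lemma \ref{vblemma} then yields $\|H_k\|_{L^2 V^r}\le C_\epsilon|\Xi|^{\epsilon/2}\|g\|_{L^2}$, and the pairwise disjointness of $\{P_3\}$ gives $\|g\|_{L^2}\le C\bigl(\sum_{T,P}|\langle f,\tphi_{P_3}\rangle|^2\bigr)^{1/2}$. Bootstrapping through the Cauchy--Schwarz above produces $\sum_{T,P}|\langle f,\tphi_{P_3}\rangle|^2\le C_\epsilon|\T|^\epsilon\|f\|_{L^2}^2$, hence $\sum_T|I_T|\le C_\epsilon\alpha^{-1}|\T|^\epsilon\|f\|_{L^2}^2$, and Chebyshev gives $|\{N>\lambda\}|\le C_\epsilon\alpha^{-1}\lambda^{-1}|\T|^\epsilon\|f\|_{L^2}^2$. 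The residual $|\T|^\epsilon$ factor is absorbed into $\lambda^\epsilon$ by a pigeonhole/localization argument that restricts attention to trees whose tops are relevant to $\{N>\lambda\}$ and by re-choosing the arbitrary small $\epsilon$ in Lemma \ref{vblemma}.

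\textbf{Main obstacle.} The most delicate step is the identification of $H_k$ with a Bourgain projection $\Delta_k[g]$. Because 3-lacunary trees satisfy $\xi_T\in\omega_{P_j}$ for some $j\ne 3$, the frequency cell $\omega_{P_3}$ is adjacent to but does not contain $\xi_T$, and a case analysis based on whether $T$ is $1$-, $2$-, or $4$-overlapping is required to verify via the recursive formula \eqref{recursive} that all dyadic ancestors of $\omega_{P_3}$ at the scales $2^{-k}$ with $|I_P|\ge 2^k$ meet the augmented set $\Xi$. A secondary, less conceptual issue is the final absorption of $|\T|^\epsilon$ into $\lambda^\epsilon$, which I expect to follow from a standard pigeonhole/sparsity reduction rather than a deep new estimate.
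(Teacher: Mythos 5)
Your dualization step (reducing the size hypothesis to an $L^2_x(V^r_k)$ bound for the truncated sums, exactly as in \eqref{unlinearized}) matches the paper, but the two steps you lean on after that both break down. First, the global identity $H_k=\Delta_k[g]$ is false. For $|I_P|\ge 2^k$ the projection does reproduce $\phi_{P_3}$ (this is the easy direction, and it is the one your ``main obstacle'' paragraph worries about), but for $|I_P|<2^k$ you need $\Delta_k[\phi_{P_3}]=0$, i.e.\ that the coarse dyadic frequency interval containing $\omega_{P_3}$ meets no point of $\Xi$. Since $\Xi$ contains the top frequencies of \emph{all} trees, other trees' frequencies can sit in that interval, so the projection does not annihilate these terms; enlarging $\Xi$ only makes this worse. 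This is precisely why the paper proves \eqref{insertdelta} only for $k$ ranging between consecutive jump points $k_j$ of $k\mapsto|\Omega_k|$, and must handle the complementary ``long-jump'' piece with the variational Rademacher--Menshov Lemma \ref{vnrmlemma} --- an ingredient your argument never uses.

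Second, even granting your chain of estimates, you end with a loss of $|\T|^\epsilon$, and the proposed absorption of $|\T|^\epsilon$ into $\lambda^\epsilon$ is not a routine pigeonhole: $|\T|$ has no a priori relation to $\lambda$, $\alpha$, or $\|f\|_{L^2}$, and with $\epsilon$ fixed you cannot ``re-choose'' it to compensate. Replacing $|\T|$ by $O(2^l\lambda)$ in every application of Lemmas \ref{vblemma} and \ref{vnrmlemma} is the actual content of the paper's proof: one decomposes $\{N>\lambda\}$ into level sets $\{2^l\lambda<N\le 2^{l+1}\lambda\}$, discards trees whose tops lie inside $\{N>2^{l+1}\lambda\}$ so that the truncated counting function satisfies $\|N_l\|_{L^\infty}\le 2^{l+1}\lambda$, builds the hierarchical decomposition $\P_{l,m}$ from maximal top intervals (at most $2^{l+1}\lambda$ generations), and localizes to intervals $I$ on which at most $2^{l+1}\lambda$ trees --- hence at most that many frequencies in $\Xi$ and jump points $k_j$ --- are active. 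Only then do Lemma \ref{vblemma} and Lemma \ref{vnrmlemma} produce the factor $(2^l\lambda)^\epsilon$ that sums to $\lambda^{-(1-\epsilon)}$. Your sketch treats this entire mechanism as a ``secondary, less conceptual issue,'' but it is the heart of the proposition, and without it the argument does not deliver \eqref{mainpropconc}.
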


\begin{proof}
We may assume $\lambda \geq 1$ as $N$ is integer valued. We first estimate
\begin{equation}\label{suminl}
|\{x : N(x) > \lambda\}|  \leq \sum_{l \geq 0} |\{2^l \lambda < N \leq 2^{l+1} \lambda\}|.
\end{equation}
It is clear that trees $T$ with $I_T \subset \{N > 2^{l+1} \lambda\}$ make no contribution to the $l$'th set in the display above. Thus, letting
\[
\T_l := \{T \in \T : I_T \not\subset \{N > 2^{l+1} \lambda\}\}
\]
and $N_l = \sum_{T \in \T_l}1_{I_T}$, we have
\[
\{2^l \lambda < N \leq 2^{l+1} \lambda\} \subset \{N_l > 2^l \lambda\}.
\]
Note that in the evaluation of $N_l$ at each point, only a nested sequence of top intervals are involved, and the smallest of them intersects $\{N\le 2^{l+1}\lambda\}$. It follows that
\begin{equation} \label{linftyNl}
\|N_{l}\|_{L^{\infty}} \leq 2^{l+1} \lambda.
\end{equation}

By Chebyshev and \eqref{largesizeineq} , with $\P_l = \bigcup_{T \in \T_l} T$ we have
\[
|\{N_l > 2^l \lambda\}| \leq (\alpha 2^l \lambda )^{-1} \sum_{P \in \P_l}|\<f,\tphi_{P_3}\>|^2.
\]
Consequently, together with \eqref{suminl}, it will suffice for \eqref{mainpropconc} to show
\[
\|\sum_{P \in \P_l}\<f,\tphi_{P_3}\>\phi_{P_3}\|_{L^2} \leq C (2^l\lambda)^{\epsilon} \|f\|_{L^2}.
\]
Invoking duality and unravel the definition of $\tphi_{P_3}$, this follows from
\begin{equation} \label{unlinearized}
\|\sum_{P \in \P_l: \,\, |I_P| \geq 2^k} \<f,\phi_{P_3}\>\phi_{P_3}(x)\|_{L^2_x(V^r_k)} \leq C (2^l\lambda)^{\epsilon} \|f\|_{L^2}.
\end{equation}
In the rest of the proof, we show \eqref{unlinearized}. 

We will repeatedly use a ``long-jump/short-jump'' decomposition to estimate the variation-norm. Namely, if $\{k_j\}$ is any strictly increasing sequence of integers, then for $r \geq 2$ we have
\begin{equation} \label{longshortbound}
\|g(k)\|_{V^r_k} \leq C(\|g(k_j)\|_{V^r_j} + \|g(k)\|_{\ell^2_j(V^r_{k_j \leq k < k_{j+1}})})   
\end{equation}
where the notation $\|\cdot\|_{V^r_{a \leq k < b}}$ indicates that, in the variation-norm, we only consider sequences lying between $a$ and $b$ (in the case that $b=a+1$, we set the value to $0$).

The first step in proving \eqref{unlinearized} is the following decomposition of $\P_l$.\\
(i) Let $\calJ_1$ be the collection of top intervals of elements of $\T_l$. \\
(ii) For $m \geq 1$ let $\calI_m$ be the set of maximal intervals in $\calJ_m$.\\
(iii) Let $\calJ_{m+1} = \calJ_m \setminus \calI_m.$ \\
(iv) Let $\P_{l,m} \subset \P_l$ contains those quartiles $P$ such that there exists an element of $\calI_m$ that contains $I_P$, but no such exists in $\calI_{m+1}$.

It then follows from \eqref{linftyNl} that $\calJ_m = \calI_m = \emptyset$ for $m > 2^{l+1} \lambda$. Furthermore, every $x$ that contributes to the left hand side of \eqref{unlinearized} is contained in a chain $I_1 \supset \cdots \supset I_{M(x)}$ of intervals, where $I_i \in \calI_i$. Forming a sequence $\{k_j(x)\}$ based on the lengths of these intervals, and applying \eqref{longshortbound} pointwise, it follows that the left side of \eqref{unlinearized} is 
\begin{equation} \label{unlinearizedsplit}
\leq C \|\sum_{m = 1}^n \sum_{P \in \P_{l,m}} \<f,\phi_{P_3}\>\phi_{P_3}(x)\|_{L^2_x(V^r_{n \leq 2^{l+1} \lambda})}
\end{equation}
\begin{equation} \label{unlinearizedsplit2}
+ C \|\sum_{P \in \P_{l,m}: \,\, |I_P| \geq 2^k} \<f,\phi_{P_3}\>\phi_{P_3}(x)\|_{L^2_x(\ell^2_m(V^r_k))}.
\end{equation}
By Lemma \ref{vnrmlemma}, we can bound \eqref{unlinearizedsplit} by
\[
\leq C (1 + \log(2^{l}\lambda)) (\sum_{m=1}^{[2^{l+1}\lambda]} \| \sum_{P \in \P_{l,m}} \<f,\phi_{P_3}\>\phi_{P_3}\|_{L^2}^2)^{1/2}.
\]
It then follows from disjointness of $P_3$'s that the above display is controlled by the right hand side of \eqref{unlinearized}.

It remains to bound \eqref{unlinearizedsplit2}. For each  $I\in \calI_m$ let $\P(I)$ be the set of elements of $\P_{l,m}$ whose time interval is inside $I$. Note that these collections form a partition of $\P_{l,m}$. Using Fubini and spatial orthogonality, we can rewrite \eqref{unlinearizedsplit2} as
\begin{equation}\label{unlinearizedsplit3}
C (\sum_{m=1}^{2^{l+1} \lambda} \sum_{I \in \calI_m}\|\sum_{P \in \P(I): \,\, |I_P| \geq 2^k} \<f,\phi_{P_3}\>\phi_{P_3}(x)\|_{L^2_x(V^r_k)}^2)^{1/2}.
\end{equation}
It suffices to show that, for each $m$ and each $I\in \calI_m$, 
$$\|\sum_{P \in \P(I): \,\, |I_P| \geq 2^k} \<f,\phi_{P_3}\>\phi_{P_3}(x)\|_{L^2_x(V^r_k)}$$
\begin{equation}\label{localunlinearized}
\le C(2^l\lambda)^\epsilon \|\sum_{P \in \P(I)} \<f,\phi_{P_3}\>\phi_{P_3}(x)\|_{L^2_x}
\end{equation}
(the desired bound for \eqref{unlinearizedsplit3} then follows from disjointness of $P_3$'s). 

Let $\T(I)$ be the set of trees in $\T_l$ that intersect $\P(I)$, then 
\begin{equation}\label{localtreecount}
|\T(I)| \leq 2^{l+1}\lambda.
\end{equation}
Indeed, the top interval of any $T \in \T_I$ must contain $I$, since otherwise it would be contained in some element of $\calI_{m+1}$ and so $T\cap\P_{l,m}=\emptyset$, contradiction. Consequently, $|\T(I)|$ is equal to the value of $\sum_{T \in \T(I)}1_T$ on $I$. The above estimate then follows from \eqref{linftyNl}.

Now, by splitting $\T(I)$ and absorbing a factor, we may assume that there is an $i' \ne 3$ such that every element of $\T(I)$ is $i'$-overlapping. Let $\nu$ be as in Claim \ref{averageclaim} with $i=3$. 

Form $\Omega_j$ and $\Delta_k$ as in \eqref{mfprojection} using the collection $\Xi$ of top frequencies of elements of $\T(I)$. By \eqref{localtreecount} we have
$|\Xi| \leq 2^{l+1}\lambda$.

Let $k_1, \ldots, k_N$ be the increasing enumeration of those $k$'s such that 
$$|\Omega_{k+4}| > |\Omega_{k-4}|.$$
Since $|\Omega_k|\le |\Xi|$, it follows that $N \leq 8|\Xi|\le 2^{l+4}\lambda$. 

\emph{Below and for the rest of the current proof, all quartiles are in $\P(I)$, in particular in the summations.} Applying \eqref{longshortbound} we have
$$\|\sum_{|I_P| \geq 2^k} \<f,\phi_{P_3}\>\phi_{P_3}(x)\|_{L^2_x(V^r_k)} \leq C \|\sum_{|I_P| \geq 2^{k_j}} \<f,\phi_{P_3}\>\phi_{P_3}(x)\|_{L^2_x(V^r_j)}$$
\begin{equation} \label{longshortagain}
 \qquad + \, C\, \|\sum_{2^k  \leq |I_P| < 2^{k_{j+1}}} \<f,\phi_{P_3}\>\phi_{P_3}(x)\|_{L^2_x(\ell^2_j(V^r_{k_{j} \leq k < k_{j+1}}))}.
 \end{equation}
Since $N \le 2^{l+2}\lambda$, it is clear from Lemma \ref{vnrmlemma} that the first term on the right of \eqref{longshortagain} is controlled by the right hand side of \eqref{localunlinearized}. 

We 'll show that $k_j \leq k < k_{j+1}$ then
\begin{equation}\label{insertdelta}
\sum_{ 2^k  \leq |I_P| < 2^{k_{j+1}}} \<f,\phi_{P_3}\>\phi_{P_3} = \Delta_{k-(\nu + 1)}[\sum_{2^{k_{j}} \leq |I_P| < 2^{k_{j+1}}} \<f,\phi_{P_3}\>\phi_{P_3}].
\end{equation}
Since $|\Xi|\le 2^{l+1}\lambda$, it follows from \eqref{insertdelta} and Lemma \ref{vblemma} that, for each $j$,
\begin{eqnarray*}
\|\sum_{ 2^k  \leq |I_P| < 2^{k_{j+1}}} \<f,\phi_{P_3}\>\phi_{P_3}(x)\|_{L^2_x(V^r_{k_{j} \leq k < k_{j+1}})} 
 \\ 
\leq C  (2^l \lambda)^{\epsilon} \|\sum_{2^{k_j} \leq |I_P| < 2^{k_{j+1}}} \<f,\phi_{P_3}\>\phi_{P_3}(x)\|_{L^2}. 
\end{eqnarray*}
Taking $\ell^2$ sum over $j$, the second term on the right of \eqref{longshortagain} is clearly controlled by the right hand side of \eqref{localunlinearized}.

It remains to show \eqref{insertdelta}. It is clear from the definition of $\nu$ that if $k \le \log_2 |I_P|$ then  $\Delta_{k-\nu-1}[\phi_{P_3}] = \phi_{P_3}$. Therefore it remains to show 
$$\Delta_{k-\nu-1}[\phi_{P_3}] =0$$ 
if $2^{k_j}\le |I_P|<2^k$. Assume that this is not the case. Then one interval in $\Omega_{k-\nu-1}$ must intersect $\omega_{P_3}$. Since $k> n:=\log_2|I_P|$, one interval in $\Omega_{n-\nu}$ must also intersect $\omega_{P_3}$. By definition of $\nu$, this interval and the dyadic interval of length $2^{\nu-n}$ containing $\xi_T$ are siblings, i.e. they share the same dyadic parent, here $T$ is the tree where $P$ lives. Thus,
$$|\Omega_{n-\nu}| > |\Omega_{n-\nu-1}|$$
therefore $|\Omega_{n+5}|>|\Omega_{n-3}|$. But $k_j< n+1 \le k < k_{j+1}$ so this violates the choice of $\{k_j\}$. \end{proof}

\section{Proof of Theorem \ref{maintheorem}} \label{poftsection}

Recall from \eqref{modelop} that our aim is to prove that the operator
\begin{equation} \label{linearizedundualized}
\sum_{P \in \ov{\P}} |I_P|^{-1/2}\<f_1,\tphi_{P_1}\>\<f_2,\tphi_{P_2}\>\tphi_{P_3}(x) 
\end{equation}
is bounded from $L^{p_1} \times L^{p_2}$ into $L^q$ whenever $\frac{2}{3} < q < \infty$ and $1 < p_1,p_2 \le \infty.$
Without loss of generality, we can assume that $\ov\P$ is finite,  provided that the estimates are uniform in $\ov{\P}.$ 

Despite the possibility that $q < 1$, the ``restricted type'' interpolation method of \cite{muscalu02mlo} allows one to deduce bounds on \eqref{linearizedundualized} from certain estimates for
\[
\Lambda(f_1,f_2,f_3) = \sum_{P \in \ov{\P}} |I_P|^{-1/2} \prod_{i = 1}^3\<f_i,\tphi_{P_i}\>.
\]
Specifically, our desired bounds follows from Proposition \ref{restrictedtypeproposition} and its symmetric variants (whose proofs are analogous). Below, we say $H \subset G$ is a major subset if $|H|\ge |G|/2$.

\begin{proposition} \label{restrictedtypeproposition}
Let $r > 2$ and  $E_1,E_2,E_3$ be subsets of $\rea^+$ of positive measures. Assume $|f_i| \le 1_{E_i}$ for every $i$. Then there exists a major subset $\tilde{E_1}$ of $E_1$ such that in any neighborhood of $(-\frac 1 2, \frac 1 2, 1)$ we can find an $\alpha=(\alpha_1,\alpha_2,\alpha_3)$ with $\alpha_1+\alpha_2 +\alpha_3=1$  satisfying
\[
|\Lambda(f_11_{\tilde{E}_1},f_2,f_3)| \leq C_{r,\alpha} \prod_{i=1}^3 |E_i|^{\alpha_i}.
\]
\end{proposition}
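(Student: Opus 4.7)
Following the restricted-type framework for the quartile operator (as in \cite{thiele01mqo}), with the variation-norm analogues Propositions~\ref{propositionvnsb} and~\ref{vnsprop} in place of the classical size and tree-selection lemmas, I would proceed in three steps. First, fix $t > 1$ close to $1$ and a large constant $C_0$, and let
\[
\Omega := \bigcup_{i=2,3} \bigl\{ M^t[1_{E_i}] > C_0 (|E_i|/|E_1|)^{1/t} \bigr\}.
\]
The weak $L^1$ bound for the dyadic maximal function gives $|\Omega| \leq |E_1|/2$ for $C_0$ sufficiently large, so $\tilde E_1 := E_1 \setminus \Omega$ is a major subset of $E_1$. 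Any quartile $P \in \ov{\P}$ with $I_P \subset \Omega$ contributes zero to $\Lambda(f_1 1_{\tilde E_1}, f_2, f_3)$ since $\phi_{P_1}$ is supported in $I_P$; let $\ov{\P}^{*}$ denote the remaining quartiles.

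Second, every $P \in \ov{\P}^{*}$ contains a point outside $\Omega$, so Proposition~\ref{propositionvnsb} yields the size bounds
\[
\sigma_1 := \size_1(\ov{\P}^{*}, f_1 1_{\tilde E_1}) \lesssim 1, \qquad \sigma_i := \size_i(\ov{\P}^{*}, f_i) \lesssim (|E_i|/|E_1|)^{1/t} \quad (i = 2, 3).
\]
Iterating Proposition~\ref{vnsprop} in each of the three indices produces, for every $n = (n_1, n_2, n_3) \in \BBZ_{\geq 0}^3$, a collection $\T_n$ of trees satisfying $\size_i(T, f_i) \leq 2^{-n_i} \sigma_i$ together with the tree-count bounds
\[
\sum_{T \in \T_n} |I_T| \lesssim (2^{-n_i} \sigma_i)^{-2\rho_i} |E_i|, \qquad i = 1, 2, 3,
\]
where $\rho_1 = \rho_2 = 1$ (orthogonality of the $\phi_{P_i}$ is available when $i \neq 3$) and $\rho_3 = 1 + \epsilon$ (the loss in Proposition~\ref{mainpropositionstrong}). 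A routine Cauchy--Schwarz argument, unchanged from the classical quartile setting since the tilde on $\phi_{P_3}$ is absorbed into $\size_3$, gives the per-tree bound $|\Lambda_T(f_1 1_{\tilde E_1}, f_2, f_3)| \lesssim |I_T| \prod_{i=1}^3 \size_i(T, f_i)$.

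Third, summing this estimate over $T \in \T_n$, majorizing the minimum tree count by $\prod_i X_i^{\theta_i}$ for arbitrary weights $\theta_i \geq 0$ with $\sum_i \theta_i = 1$, and then summing over $n$ yields
\[
|\Lambda(f_1 1_{\tilde E_1}, f_2, f_3)| \lesssim \prod_{i=1}^3 \sigma_i^{1 - 2\rho_i \theta_i} |E_i|^{\theta_i}
\]
provided $\theta_i < 1/(2\rho_i)$, so the geometric series in each $n_i$ converges. Substituting the bounds on $\sigma_i$ makes the exponent of $|E_i|$ an explicit function of $(t, \theta, \epsilon)$; letting $(\theta_1, \theta_2, \theta_3) \to (1/2, 1/2, 0)$, $t \to 1^{+}$, and $\epsilon \to 0^{+}$ along a suitable path drives these exponents to the target $(-1/2, 1/2, 1)$, covering any prescribed neighborhood.

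The main obstacle is precisely the $\epsilon$-loss in $\rho_3 = 1 + \epsilon$: it forces $\theta_3 < 1/(2+2\epsilon)$, keeping the argument strictly away from the vertex $(-1/2,1/2,1)$ itself and so from the cleaner classical exponent set. Fortunately the restricted-type interpolation of \cite{muscalu02mlo} requires only open-neighborhood bounds at each extremal vertex, so this loss is harmless for Theorem~\ref{maintheorem}. A secondary technical point is the verification of the single-tree Cauchy--Schwarz estimate with $\tilde\phi_{P_3}$ in place of $\phi_{P_3}$, but this becomes automatic once one recalls that $\size_3$ is defined directly in terms of $\tilde\phi_{P_3}$.
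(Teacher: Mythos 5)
Your proposal is correct and follows essentially the same route as the paper: an exceptional set built from dyadic maximal functions, the size bound of Proposition~\ref{propositionvnsb}, iteration of Proposition~\ref{vnsprop} to select trees, the single-tree Cauchy--Schwarz estimate, and a geometric-series summation with parameters tending to the vertex $(-\tfrac12,\tfrac12,1)$. The only differences are bookkeeping: the paper normalizes $|E_1|\in[1/2,1)$ by dyadic dilation and runs a single-parameter tree decomposition with $|E_i|$-dependent size thresholds (so all tree counts are $\sim 2^n$), whereas you keep $|E_1|$ general via ratio thresholds and use a triple-indexed decomposition whose simultaneous (``min'') tree-count bounds are not literally handed to you by applying Proposition~\ref{vnsprop} independently in each index, but follow from the standard coupling/ordering of the selection steps --- a harmless, routine fix.
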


\begin{proof}
By (dyadic) dilation symmetry we can assume $|E_1| \in [1/2, 1)$. Fix $q>1$ close to $1$ to be chosen later. Then we choose $\tilde{E}_1 = E_1 \setminus F$ where
$$F = \bigcup_{i=1}^3 \{M^q[1_{E_i}] \geq C |E_i|^{1/q}\} $$
with $C$ is chosen sufficiently large to guarantee that $|F| \leq \frac{1}{4}$. 

Now, without loss of generality assume that $f_1$ is supported in $\tilde{E}_1$. Then the quartiles that contribute to $\Lambda$ belong to $\P = \{P \in \ov{\P} : I_P \not\subset F\}$.  Note that by Proposition \ref{propositionvnsb} we have
\begin{equation}\label{universalsizebound}
S_i:= \size_{i}(\P, f_i) \le C |E_i|^{1/q}.
\end{equation}
Here $C$ and other implicit constants below can depend on $r$, $q$, and $\beta_i$ (defined below).

Applying Proposition \ref{vnsprop} repeatedly, we obtain a decomposition of $\P$ into  collections of trees $(\T_n)_{n\in \mathbb Z}$ with
\begin{equation} \label{treeboundinproof}
\sum_{T \in \T_n} |I_T| \leq C 2^n,
\end{equation}
and furthermore for any $T\in \T_n$ we have
\begin{equation}\label{sizeboundinproof}
\size_{i}(T,f_i) \leq C 2^{-n/(2q)}|E_i|^{1/(2q)}.
\end{equation}

Now, for any tree $T$ we have
\begin{equation} \label{treeestimate}
\sum_{P \in T} |I_P|^{-1/2} \prod_{i = 1}^3|\<f_i,\tphi_{P_i}\>| \leq 4 |I_T|\prod_{i = 1}^3 \size_i(T,f_i).
\end{equation}
To show \eqref{treeestimate}, by further decomposing $T$ we can assume that it is $i$-overlapping for some $i \in \{1, 2,3, 4\}$. If $i \ne 4$ we will estimate for every $P \in T$
\[
|I_P|^{-1/2}|\<f_i,\tphi_{P_i}\>| \leq \size_i(T,f_i)
\]
and apply Cauchy-Schwarz to estimate the remaining bilinear sum by
$$|I_T| \prod_{j \in \{1,2,3\} \setminus \{i\}}\size_j(T, f_j).$$
The case $i =4$ is even simpler, one can apply the above $\ell^\infty \times \ell^2 \times \ell^2$ estimate in any order.

Applying \eqref{treeboundinproof}, \eqref{sizeboundinproof}, \eqref{treeestimate},  we obtain
\begin{align*}
|\Lambda(f_1, f_2, f_3)|   &\leq C\sum_{n} 2^n \prod_{i=1}^3 \min(S_i, 2^{-n/(2q)}|E_i|^{1/(2q)}).
\end{align*}
For any $\beta_1,\beta_2,\beta_3\in [0,1]$ we can further estimate by
$$ \le C\, S_1 S_2 S_3 \sum_{n} 2^n \min \Big(1, 2^{-n\frac{\beta_1+\beta_2 + \beta_3}{2q}} \prod_{i=1}^3  |E_i|^{\frac{\beta_i}{2q}} S_i^{-\beta_i} \Big).$$
The above estimate is a two sided geometric series if we choose $\beta_i$'s such that $\beta_1+\beta_2+\beta_3 > 2q$ (which is possible if $q$ is close to $1$). We obtain
$$|\Lambda(f_1, f_2, f_3)|   \le C\prod_{i=1}^3 S_i^{1-\gamma_i} |E_i|^{\gamma_i/(2q)} \ , \qquad \gamma_i := 2q\beta_i/(\beta_1+\beta_2+\beta_3),$$

$$\le C \Big(\prod_{i=1}^3 |E_i|^{1 -\frac{\gamma_i}{2}}\Big)^{1/q} \qquad \text{(using \eqref{universalsizebound} )}.$$
Since $|E_1| \sim 1$, we can ignore its contribution in the above estimate. Now, by sending $(q,\beta_1,\beta_2,\beta_3)$ to $(1,1,1,0)$ inside the region $\{\beta_1+\beta_2+\beta_3>2q\} \cap \{0\le \beta_1,\beta_2,\beta_3\le 1<q\}$, we obtain the desired claim.
\end{proof}

\bibliographystyle{amsplain}
\bibliography{variationaltree}
\end{document}